\documentclass[a4paper,12pt]{article}
\usepackage[T1]{fontenc}
\usepackage[utf8]{inputenc}
\usepackage[english]{babel}
\usepackage{amsmath,amsthm,amssymb,latexsym,amsfonts, mathtools}
\usepackage{mathrsfs} %math fonts
\usepackage{tikz-cd}
\usepackage{listings}
\usepackage{faktor} % for quotients of algebraic structures
\usepackage{nicefrac}
\usepackage{color,soul}
\usepackage[hidelinks]{hyperref}

\renewcommand{\epsilon}{\varepsilon}
\renewcommand{\theta}{\vartheta}
\renewcommand{\rho}{\varrho}
\let\temp\phi
\let\phi\varphi
\let\varphi\temp

\newcommand{\Z}{\mathbb{Z}}
\newcommand{\Q}{\mathbb{Q}}
\newcommand{\R}{\mathbb{R}}
\newcommand{\C}{\mathbb{C}}

\newcommand{\trdeg}{\text{trdeg}}

\theoremstyle{definition}

\theoremstyle{plain}
\newtheorem{proposition}{Proposition}
\theoremstyle{plain}
\newtheorem{theorem}{Theorem}
\theoremstyle{plain}
\newtheorem{lemma}{Lemma}
\theoremstyle{plain}
\newtheorem{corollary}{Corollary}
\theoremstyle{plain}

\theoremstyle{definition}
\newtheorem{lemma*}{Lemma}
\theoremstyle{definition}

%~
\begin{document}
	\title{On the algebraic independence of periods of abelian varieties and their exponentials}
	\author{Riccardo Tosi}
	\date{September 6, 2023}
	\maketitle
	
    \begin{abstract}
    We generalize a result by Vasil'ev on the algebraic independence of periods of abelian varieties to the case when some of these periods are replaced by their exponentials. We eventually derive some applications to values of the beta function at rational points.
    \end{abstract}
 
	\section{Introduction}

    Let X be an abelian variety of dimension $g$ defined over a number field $K$. It is possible to choose a basis $\phi_1,\dots,\phi_{2g}$ of $\text{H}^1_{\text{dR}}(X,K)$ in such a way that $\phi_1,\dots,\phi_g\in \Gamma(X,\Omega^1_{X/K})$. Fix $2g$ paths $\gamma_1,\dots,\gamma_{2g}$ on $X(\C)$ which induce a basis for $\text{H}_1(X(\C),\Q)$ and consider the period matrix 
    \[
	\Omega=(\omega_{ij})=\left( \int_{\gamma_j} \phi_i\right), \quad i,j=1,\dots,2g.
	\] 
    
	As far as the transcendence properties of the periods of $X$ are concerned, Chudnovsky \cite{Chudnovsky-main} proved that there are always two algebraically independent numbers among the entries of $\Omega$, following the methods introduced in his celebrated theorem on the transcendence degree of periods of elliptic curves. Later on, Vasil'ev \cite{Vasil'ev} managed to show that two algebraically independent periods exist even when restricting to any $g+1$ rows of $\Omega$. 
 
    A bunch of applications have been derived, in particular concerning values of the beta and gamma function at rational points. To quote some, Chudnovsky's result applied to the case of elliptic curves with complex multiplication led to the first proof of the algebraic independence of $\pi$ with both $\Gamma(\frac{1}{4})$ and $\Gamma(\frac{1}{3})$. On the other hand, Vasil'ev's refinement yields for any $n\ge 3$ the existence of at least two algebraically independent numbers among $\pi, \Gamma\left(\frac{1}{n}\right), \dots, \Gamma\left(\frac{g}{n}\right)$, where $g=\lfloor \frac{n-1}{2}\rfloor$.
    
	The aim of the present work is to expose a transcendence result along these lines in the case when some periods are replaced by their exponentials. Before stating our main result, we will briefly recall some well-known facts regarding transcendence measures.
 
	If $\omega\in\C$ is transcendental, we say that a function $\phi:\R^2\to \R$ is a \emph{transcendence measure} for $\omega$ if for all non-zero polynomials $P$ with integer coefficients of degree at most $n$ and height at most $h$ we have $\log |P(\omega)|\ge -\phi(n,\log h)$. A real number $\tau>0$ is said to be a \emph{transcendence type} for $\omega$ if there is a constant $c(\omega,\tau)>0$ such that $c(\omega,\tau)(n+\log h)^{\tau}$ is a transcendence measure for $\omega$.
 
	An elementary argument involving the pigeonhole principle shows that any transcendence type $\tau$ for a transcendental number $\omega$ must satisfy $\tau\ge 2$. On the other hand, almost all transcendental numbers admit a transcendence type $\le 2+\epsilon$ for all $\epsilon>0$; we refer for instance to \cite{Amoroso} for a proof of this assertion. The only particular number which is known to have a transcendence type $\le 2+\epsilon$ for all $\epsilon>0$ is $\pi$, for which explicit transcendence measures can be found in \cite{Waldschmidt-measures}. 
 
	We may now state the main result that we wish to prove. Fix $g$ non-zero complex numbers $\xi_1,\dots, \xi_g$ and define the $g\times 2g$ matrix $E$ whose $i$-th row, for $i=1,\dots, g$, coincides with
	\[
	\left(e^{\xi_i \omega_{i1}}, \dots, e^{\xi_i \omega_{i,2g}}\right).
	\]
	Pick two integers $m,n$ such that $1\le m \le 2g$ and $0\le n\le g$. Let us choose $m$ rows of the matrix $\Omega$ and $n$ rows of the matrix $E$, say the rows $i_1,\dots, i_n$ for $1\le i_1,\dots, i_n\le g$. Let $S$ be the set made up of the entries of the chosen rows, together with $\xi_{i_1},\dots,\xi_{i_n}$. According to \cite{Schneider}, each row of $\Omega$ cannot have only algebraic entries, so the field $\Q(S)$ has transcendence degree at least $1$ over $\Q$.
	\begin{theorem}
		\label{theo: 3}
		Suppose that $\Q(S)$ has transcendence degree exactly $1$ over $\Q$. If $2m+n>2g$, then any transcendence type $\tau$ of a transcendental number $\omega\in\Q(S)$ satisfies 
		\[
		\tau\ge2+\frac{2m+n-2g}{2g+n}.
		\]
	\end{theorem}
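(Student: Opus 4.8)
The plan is to recast the statement as a quantitative (transcendence-measure) version of the Chudnovsky--Vasil'ev method: the same fundamental inequality that, unconditionally, produces two algebraically independent periods will here---once the transcendence degree has collapsed to $1$---instead force the transcendence type to be large. So I fix the given transcendental $\omega\in\Q(S)$ and one of its transcendence types $\tau$; since $\Q(S)$ has transcendence degree $1$, every element of $S$ is algebraic over $\Q(\omega)$, and after passing to a fixed finite extension I may assume each lies in $\Q(\omega)$ (this alters only the constants in the estimates below, not the exponent $\tau$). The first step is to package all the relevant numbers as values of analytic functions attached to a single commutative algebraic group. Because the chosen $m$ rows may involve second-kind differentials, I would work with the universal vectorial extension $X^{\natural}$ of $X$ (dimension $2g$), whose exponential map $\exp_{X^{\natural}}\colon\C^{2g}\to X^{\natural}(\C)$ has the full period matrix $\Omega$ for its lattice, and then form $G=X^{\natural}\times\mathbb{G}_m^{\,n}$ of dimension $d=2g+n$. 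Its coordinate functions combine order-$2$ theta-type functions (from $X^{\natural}$) with the order-$1$ exponential (from $\mathbb{G}_m^{\,n}$).

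Next I would encode the data geometrically. The columns $\un{\omega}_1,\dots,\un{\omega}_{2g}$ of $\Omega$ generate the rank-$2g$ period lattice in $\C^{2g}=\operatorname{Lie}(X^{\natural})$. Lifting through the linear embedding $\un{z}\mapsto(\un{z},\xi_{i_1}z_{i_1},\dots,\xi_{i_n}z_{i_n})$ of $\C^{2g}$ into $\operatorname{Lie}(G)$ cuts out an analytic subgroup $\mathcal{H}\subset G$ of dimension $2g$, and $\exp_G$ sends each $\un{\omega}_j$ to the point $(0_{X^{\natural}},\,e^{\xi_{i_1}\omega_{i_1,j}},\dots,e^{\xi_{i_n}\omega_{i_n,j}})$. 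Thus the entries of the chosen rows of $E$ appear as the toric coordinates of these $2g$ points, while the $\xi$'s and the $m$ selected period entries $\omega_{ij}$ appear among the coordinates of $\mathcal{H}$ and of the evaluation points---and all of them lie in $\Q(\omega)$. In this dictionary the quantity $2g+n=d$ is the ambient dimension, whereas $2m+n$ counts the coordinate directions we genuinely control (the toric factor contributing $n$, and the $m$ abelian rows contributing $2m$ relative to the dimension $2g$, exactly as in Vasil'ev's count where $m>g$ is the decisive threshold).

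The engine is then the classical four-step auxiliary-function argument. First, by Siegel's lemma I construct a nonzero polynomial $P$ in the coordinate functions of $G$, of degree $\le D$ and height $\le H$, whose pullback $F=P\circ\exp_G$ vanishes to order $\le T$ along $\mathcal{H}$ at all small integer combinations $\sum_j a_j\un{\omega}_j$ with $|a_j|\le L$; comparing the number of monomials to the number of linear conditions fixes the relation among $D,H,T,L$ in terms of $d$ and the controlled-direction count $2m+n$. Second, since $F$ is entire of finite order on $\operatorname{Lie}(\mathcal{H})\cong\C^{2g}$, a Schwarz-lemma extrapolation turns the vanishing into an upper bound $|F(\text{point})|\le\exp(-c\,\Sigma)$ for a suitably large $\Sigma$. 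Third, because every coordinate involved lies in $\Q(\omega)$, each such value is a specialization of a polynomial in $\omega$ of degree $\ll D$ and height $\ll H$, so when it is nonzero the definition of transcendence type gives $|F(\text{point})|\ge\exp(-c'(D+\log H)^{\tau})$. Confronting the two estimates and optimizing $D,H,T,L$ then yields precisely
\[
\tau\ \ge\ 2+\frac{2m+n-2g}{2g+n}.
\]

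The main obstacle, and the technical heart, is the hidden hypothesis of the third step: that $F$ does \emph{not} vanish to order $T$ at \emph{every} translate. This requires a zero (multiplicity) estimate on the mixed group $G=X^{\natural}\times\mathbb{G}_m^{\,n}$, ruling out that all the prescribed vanishing is absorbed by a proper algebraic subgroup. Here the abelian zero estimate underlying Vasil'ev's theorem must be fused with the toric one, reconciling the two different orders of growth ($2$ and $1$) inside a single counting function---this is what makes the combined bound delicate. It is exactly at this point that the hypothesis $2m+n>2g$ is used: it guarantees that the controlled directions do not fit inside any proper subgroup through which the vanishing could factor, which is simultaneously the condition making the numerator $2m+n-2g$ positive and the resulting estimate stronger than the generic bound $\tau\ge 2$.
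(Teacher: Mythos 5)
There is a genuine gap, and it sits exactly at the step you yourself call the technical heart: the non-vanishing of the auxiliary function. You defer it to a zero (multiplicity) estimate on the mixed group $G=X^{\natural}\times\mathbb{G}_m^{\,n}$, to be obtained by ``fusing'' the abelian and toric estimates. But no such estimate is available in this situation: the vanishing you impose is along the \emph{analytic} subgroup $\mathcal{H}$ cut out by the transcendental slopes $\xi_{i_1},\dots,\xi_{i_n}$, not along an algebraic subgroup, and the interaction between the order-two theta-type functions and the order-one exponentials is precisely the obstruction that the paper singles out in its last section as open (``the exponential terms in the expression of $\Phi(z)$ make it somewhat unclear how to practically compute such rank''). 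The shape of the statement itself is a symptom of this: if your multiplicity estimate were proved, one could bound the extrapolation scale polynomially in the construction scale, feed the resulting sequence of polynomials into Gelfond's criterion, and conclude $\trdeg(\Q(S)/\Q)\ge 2$ --- a strictly stronger statement that the paper explicitly says it cannot reach. So your plan, as written, either proves too much or cannot be completed; the missing ingredient is not a routine technicality.

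What the paper does instead --- and what is absent from your write-up --- is Chudnovsky's minimal-scale trick, which avoids any zero estimate. The auxiliary function $\Phi$ (a polynomial in the abelian functions $A_s$, the quasi-periodic functions $H_i$, which play the role of your vectorial-extension coordinates, and the exponentials $e^{h_r\xi_r z_r}$) is not identically zero because these functions are algebraically independent (Brownawell--Kubota). Hence there is a minimal $N_0\ge N$ such that $\partial\Phi(k\cdot\lambda)=0$ for all $\|k\|\le N_0^r$, $|\partial|\le N_0^t$, while some $\partial_0\Phi(k_0\cdot\lambda)\neq 0$ with $\|k_0\|\le (N_0+1)^r$, $|\partial_0|<(N_0+1)^t$. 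The Schwarz-lemma extrapolation and the arithmetic estimate are then run at the \emph{uncontrolled} scale $N_0$ rather than at $N$. This is fatal for Gelfond's criterion but harmless for a transcendence-type bound, because the degrees $D$, $D_0$ of $\Phi$ remain at scale $N\le N_0$ while only the vanishing radii grow: one obtains a nonzero integer polynomial $Q$ with
\[
0<|Q(\omega)|\le e^{-c\,T_0R_0^2},\qquad t(Q)\le c\, D_0R_0\log(D_0R_0),
\]
and the parameters $d_0,r,t$ were chosen so that $\bigl(2+\tfrac{1}{a}\bigr)d_0+\tfrac{r}{a}<t$, which gives $(D_0R_0\log(D_0R_0))^{2+1/a}<T_0R_0^2$ for every $a>\frac{2g+n}{2m+n-2g}$, hence $\tau\ge 2+\frac{1}{a}$. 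Unless you either supply a proof of the multiplicity estimate on $G$ relative to $\mathcal{H}$ (which would be a significant result on its own) or replace it by this minimal-$N_0$ device together with the functional algebraic independence input, your third step cannot be executed.
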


	In particular, any such transcendence type is bounded away from $2$, which is the case for almost no transcendental number. In fact, we expect the transcendence degree of $\Q(S)$ to be at least $2$ whenever $2m+n>2g$. We will later expose the main obstructions to obtaining a result of this kind.
 
    For the proof of Theorem~\ref{theo: 3}, we follow closely the strategy of \cite{Vasil'ev}, which can be recovered from the case $n=0$. It relies on the construction of an auxiliary function vanishing with high multiplicity on prescribed points of a lattice in $\C^g$ by exploiting Siegel's Lemma. As far as the structure of our exposition is concerned, the next section covers the general setting and some preliminary lemmas. The third section revolves around the construction of the aforementioned auxiliary function, whereas the conclusion of the proof of Theorem~\ref{theo: 3} takes place in the fourth section, and is carried out by analytic means via a generalization by Bombieri and Lang of Schwarz's Lemma to several variables. We will eventually reserve the last section for some applications to values of the beta function at rational points, in which it is possible to turn Theorem~\ref{theo: 3} into an algebraic independence criterion. For example, we shall prove that there are at least two algebraically independent numbers among
    \[
    B\left( \frac{1}{12},\frac{1}{12}\right), \; B\left(\frac{5}{12},\frac{5}{12}\right),\; \pi, \; e^{\pi^2},\; e^{i\pi^2}.
    \]

    \subsection*{Acknowledgements} I wish to thank Johannes Sprang for his continuous support, his patience and his thorough dedication in helping me write the present text. All this would not have been possible without him.
    
    While writing this article, I was supported by the DFG Research Training Group 2553 \textit{Symmetries and classifying spaces: analytic, arithmetic and derived}.
    
	\section{Notation and setting}
	We first introduce some notation. The standard coordinates in $\C^g$ will be denoted by $z=(z_1,\dots,z_g)\in\C^g$. For $j=1,\dots, 2g$, we set $\lambda_j= (\omega_{1j},\dots,\omega_{gj})$, that is, $\lambda_j$ is the vector of the first $g$ entries of the $j$-th column of $\Omega$. Since we have chosen $\phi_1,\dots,\phi_g\in \Gamma(X,\Omega^1_{X/K})$, the vectors $\lambda_1,\dots,\lambda_{2g}$ generate a lattice $\Lambda$ of $\C^g$ which yields an isomorphism $X(\C)\cong \C^g/\Lambda$. 
 
    For an integral vector $k=(k_1,\dots, k_g)\in \Z^g$, we set
	\[
	|k|\coloneqq \sum_{j=1}^g |k_j|, \quad \|k\|\coloneqq \max_{j=1,\dots,g} |k_j|, \quad k\cdot \lambda\coloneqq \sum_{j=1}^g k_j\lambda_j, \quad z^k=z_1^{k_1}\dots z_g^{k_g}.
	\]
	Given a differentiation operator
	\[
	\partial=\frac{\partial^m}{\partial z_1^{m_1}\dots\partial z_g^{m_g}},
	\] 
	its order will be denoted by $|\partial|=m=m_1+\dots +m_g$. The notation $|\partial|=0$ signifies that $\partial$ is the identity operator. For an entire function $f:\C^g\to \C$, we write $|f|_R$ for the maximum modulus of $f$ in the closed ball of radius $R\ge 0$ centred at the origin.
 
	Given a polynomial $P\in \C[x_1,\dots,x_n,y_1,\dots,y_m]$, we set for the sake of brevity $x=(x_1,\dots,x_n)$, $y=(y_1,\dots,y_m)$. We will write $\deg P$ for the degree of $P$, while the notation $\deg_x P$ will refer to the degree of $P$ in $x$, and similarly for $\deg_y P$. The maximum modulus of the coefficients of $P$, the \emph{height} of $P$, will be denoted by $H(P)$. If $P\neq 0$, we also denote by $t(P)$ the \emph{type} of $P$, which stands for the maximum between the degree and the logarithm of the height of $P$. 
 
	Let now $L$ be a subfield of $\C$ such that the extension $L/\Q$ is finitely generated. We may write $L=\Q(\omega_1,\dots,\omega_q,\chi)$ for some $\omega_1,\dots,\omega_q\in\C$ algebraically independent and $\chi\in \C$ integral over $\Z[\omega_1,\dots,\omega_q]$ of degree $d$, say. Any element $a\in L$ can be written uniquely in the form $aP=\sum_{i=1}^d P_i y^{i-1}$ for suitable $P,P_1,\dots, P_d\in \Z[\omega_1,\dots,\omega_q]$ such that $\gcd(P,P_1,\dots, P_d)=1$. For $i=1,\dots, q$, we define the degree of $a$ in $\omega_i$ and the type of $a$ respectively as
	\begin{gather*}
	\deg_{\omega_i} a \coloneqq \max \{\deg_{\omega_i} P, \deg_{\omega_i} P_1,\dots, \deg_{\omega_i} P_d \},\\
	 t(a)\coloneqq \max\{t(P),t(P_1),\dots, t(P_d) \}.
	\end{gather*}
	We remark that these notions depend on the choice of generators $\omega_1,\dots, \omega_q,\chi$ for the extension $L/\Q$. Upper bounds for the type of a number in $L$ can be reduced to computations with types of polynomials via the following
    \begin{lemma}
		\label{lemma: how to deal with type proved}
		There is a constant $c>0$, only depending on $\omega_1,\dots,\omega_q,\chi$, such that for any polynomial $P\in\Z[x_1,\dots,x_q,y]$
		\[
		t(P(\omega_1,\dots,\omega_q,\chi))\le c \,t(P)+c\log(\deg P).
		\]
    \end{lemma}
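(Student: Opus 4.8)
The plan is to make everything explicit through the integral relation satisfied by $\chi$. Since $\Z[\omega_1,\dots,\omega_q]$ is a polynomial ring, hence integrally closed, and $\chi$ is integral of degree $d$, there is a monic relation $\chi^d=\sum_{j=0}^{d-1}a_j\chi^j$ with $a_j\in\Z[\omega_1,\dots,\omega_q]$, and $1,\chi,\dots,\chi^{d-1}$ form a basis of $L$ over $\Q(\omega_1,\dots,\omega_q)$. Write $A\coloneqq\max_j\deg a_j$, let $H_a\coloneqq\max_j H(a_j)$, and let $m_a$ be a uniform bound on the number of monomials of the $a_j$; all of these depend only on $\omega_1,\dots,\omega_q,\chi$. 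Given $P$, first I would expand $P(\omega_1,\dots,\omega_q,\chi)=\sum_{k=0}^{D_y}Q_k(\omega_1,\dots,\omega_q)\chi^k$, where $D_y=\deg_y P\le\deg P$ and each $Q_k\in\Z[\omega_1,\dots,\omega_q]$ satisfies $\deg Q_k\le\deg P$ and $H(Q_k)\le H(P)$, since its coefficients are among those of $P$.

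Next I would reduce the high powers of $\chi$. Writing $\chi^k=\sum_{i=0}^{d-1}b_{k,i}\chi^i$ with $b_{k,i}\in\Z[\omega_1,\dots,\omega_q]$, the defining relation yields the recursion $b_{k+1,i}=b_{k,i-1}+b_{k,d-1}a_i$ (with $b_{k,-1}=0$), starting from $b_{0,i}=\delta_{i,0}$. From this I read off the degree bound $\deg b_{k,i}\le kA$ and, crucially, the height bound $H(b_{k,i})\le(1+m_aH_a)^k$: indeed, estimating the height of the product $b_{k,d-1}a_i$ by $m_aH_a\,H(b_{k,d-1})$ uses only that each $a_i$ has at most $m_a$ monomials, so no factor depending on $k$ enters the multiplicativity estimate. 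Collecting terms, the candidate numerators of $P(\omega_1,\dots,\omega_q,\chi)$ are $R_i\coloneqq\sum_{k=0}^{D_y}Q_k\,b_{k,i}$, and since $P(\omega_1,\dots,\omega_q,\chi)\in\Z[\omega_1,\dots,\omega_q][\chi]$ is integral over $\Z[\omega_1,\dots,\omega_q]$, its canonical representation has denominator $1$; hence $t(P(\omega_1,\dots,\omega_q,\chi))=\max_i t(R_i)$.

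It then remains to estimate $t(R_i)=\max(\deg R_i,\log H(R_i))$. The degree is controlled by $\deg R_i\le\deg P+D_yA\le(1+A)\,t(P)$. For the height, $R_i$ is a sum of at most $D_y+1$ products $Q_k b_{k,i}$, and bounding the height of each product by $(\deg P+1)^q H(Q_k)H(b_{k,i})$ gives
\[
\log H(R_i)\le\log(D_y+1)+q\log(\deg P+1)+\log H(P)+D_y\log(1+m_aH_a),
\]
which is at most $\bigl(1+\log(1+m_aH_a)\bigr)t(P)+(q+1)\log(\deg P+1)$. Choosing $c$ large enough in terms of $A$, $q$, $m_a$, $H_a$ — that is, in terms of $\omega_1,\dots,\omega_q,\chi$ — yields the claim, the cases $\deg P\le1$ being trivial. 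The main obstacle is precisely this height estimate: a naive bound on the growth of $H(b_{k,i})$ that tracks the number of monomials of the successive partial products introduces a spurious factor of order $\log k=O(\log t(P))$, which would only give the weaker bound $t(P)\log t(P)$ and destroy the linear dependence on $t(P)$. The point is that each reduction step multiplies by a single $a_i$ with boundedly many monomials, so the monomial-counting factors stay constant; the only unavoidable such factors come from the number of monomials of $Q_k$ and from the number of powers of $\chi$, and these are exactly what the harmless term $c\log(\deg P)$ absorbs.
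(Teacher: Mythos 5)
Your argument is correct, but it cannot be compared line-by-line with the paper's, because the paper does not actually prove this lemma: its ``proof'' is the single citation \cite[Lemme 4.2.5]{Waldschimdt-fr}. What you have written is therefore a self-contained substitute for that citation, and it is the natural one: expand $P(\omega_1,\dots,\omega_q,\chi)$ in powers of $\chi$, reduce the powers $\chi^k$ with $k\ge d$ via the monic integral equation, and track degrees and heights through the reduction. The decisive estimate, which you identify and prove correctly, is $H(b_{k,i})\le(1+m_aH_a)^k$ with base independent of $k$: this is exactly what keeps the final bound linear in $t(P)$, with the monomial-counting losses $(D_y+1)(\deg P+1)^q$ confined to the $c\log(\deg P)$ term. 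One justification should be repaired, though the repair costs nothing: you assert that $t(P(\omega_1,\dots,\omega_q,\chi))=\max_i t(R_i)$ because an element integral over $\Z[\omega_1,\dots,\omega_q]$ has denominator $1$ in its canonical representation. As a general principle that is false --- $(1+\sqrt{5})/2$ is integral over $\Z$ yet has denominator $2$ relative to the basis $1,\sqrt{5}$ --- and integrality is not the relevant point. The relevant point is that you have \emph{explicitly exhibited} a representation $\sum_i R_i\chi^i$ with denominator $1$; since $\gcd(1,R_0,\dots,R_{d-1})=1$ automatically, and the gcd-normalized representation is unique up to sign (by the linear independence of $1,\chi,\dots,\chi^{d-1}$ over $\Q(\omega_1,\dots,\omega_q)$, i.e.\ by the fact that the degree $d$ of integrality equals the field degree, which is where the integral closedness of $\Z[\omega_1,\dots,\omega_q]$ genuinely enters --- as you correctly noted at the outset), your representation \emph{is} the canonical one and the equality of types follows. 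With that rewording, and with the trivial cases $\deg P\le 1$ handled as you indicate, the proof is complete, with $c$ depending only on $q$, $d$, $\max_j \deg a_j$ and $\max_j H(a_j)$, hence only on $\omega_1,\dots,\omega_q,\chi$, as required.
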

    \begin{proof}
    \cite[Lemme 4.2.5]{Waldschimdt-fr}.
    \end{proof}
    
	Let us now consider the matrix $\Omega$ and the lattice $\Lambda$ defined above. There exist $3g$ meromorphic functions $A_1,\dots, A_g,H_1,\dots, H_{2g}:\C^g\to \C$ with the following properties:
	\begin{enumerate}
		\item all these functions vanish at the origin of $\C^g$;
		\item for all $i=1,\dots, g$, $j=1,\dots, 2g$ we have $A_i(z+\lambda_j)=A_i(z)$;
		\item for all $i,j=1,\dots, 2g$ we have $H_i(z+\lambda_j)=H_i(z)+\omega_{ij}$.
	\end{enumerate}
	The functions $A_1,\dots, A_g$ are called \emph{abelian}, while $H_1,\dots, H_{2g}$ \emph{quasi-periodic}. Moreover, there exists a theta function $\theta:\C^g\to \C$ which is entire, does not vanish at the origin and is a denominator for the abelian and quasi-periodic functions, in the sense that $\theta A_1,\dots, \theta A_g,\theta H_1,\dots, \theta H_{2g}$ are entire. We recall that, by definition of theta function, for all $j=1,\dots,2g$ we may find $u_{j1},\dots, u_{jg}, v_j\in \C$ satisfying
	\[
	\theta(z+\lambda_j)=\theta(z)\exp( 2\pi i(u_{j1}z_1+\dots+u_{jg}z_g+v_j)).
	\]
    The proof of these facts can be found for instance in \cite{Lange-Birkenhake} and \cite{Grinspan}. We conclude this section with some arithmetic properties of these functions.
	\begin{lemma}
		\label{lemma: alg coeff series of qper funct} 
		The functions $A_1,\dots, A_g$ and $H_1,\dots, H_{2g}$ can be expanded at the origin in power series of the form 
		\[
		\sum_{|\mu|\ge 1} b_{\mu} z_1^{\mu_1}\dots z_g^{\mu_g},
		\]
		where $z=(z_1,\dots,z_g)\in \C^g$, $\mu=(\mu_1,\dots,\mu_g)\in\Z^g$ and $\mu_i\ge 0$ for all $i=1,\dots, g$. Furthermore, the coefficients $b_\mu$ lie in $K$ and enjoy the following properties:
		\begin{enumerate}
			\item There exists a positive integer $c_1$ such that the maximum modulus of the conjugates of $b_\mu$ satisfies $\le e^{c_1|\mu|}$;
			\item there exists a positive integer $c_2$ such that
			\[
			(3|\mu|)!c_2^{|\mu|}b_{\mu}
			\]
			is an algebraic integer.
		\end{enumerate}
	\end{lemma}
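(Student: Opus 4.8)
The plan is to produce a closed system of first-order algebraic differential equations, defined over $K$, satisfied by our functions, and then to read off both estimates from the resulting recursion on the Taylor coefficients.

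The structural input is the following. Since each operator $\partial/\partial z_j$ commutes with translation by the lattice vectors, the partial derivatives of an abelian function are again abelian; and differentiating $H_i(z+\lambda_k)=H_i(z)+\omega_{ik}$ shows that the partial derivatives of the quasi-periodic functions are periodic, hence abelian as well. Because $X$ is defined over $K$, the field of abelian functions is a finite algebraic extension of $K(A_1,\dots,A_g)$, so every $\partial A_i/\partial z_j$ and $\partial H_i/\partial z_j$ is algebraic over $K(A_1,\dots,A_g)$. Using the entire theta function $\theta$ as a common denominator and adjoining finitely many further abelian coordinate functions (the coordinates of a projective embedding of $X$ over $K$) together with the logarithmic derivatives $(\partial\theta/\partial z_j)/\theta$, which are quasi-periodic by the transformation law for $\theta$, I would assemble a finite family $f_1,\dots,f_M$ of entire functions, all with power series over $K$ at the origin, closed under differentiation in the strong sense that
\[
\frac{\partial f_l}{\partial z_j}=P_{lj}(f_1,\dots,f_M),\qquad P_{lj}\in K[Y_1,\dots,Y_M],
\]
with the $P_{lj}$ of degree bounded independently of $l,j$. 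After scaling, the coefficients of the $P_{lj}$ may be taken to be algebraic integers of bounded house, and the values $f_l(0)$ to be algebraic.

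From here both assertions follow by a single induction on $s=|\mu|$. Writing $f_l=\sum_\mu a_{l,\mu}z^\mu$ and comparing coefficients of $z^\mu$ in the relation above yields
\[
(\mu_j+1)\,a_{l,\mu+e_j}=\bigl[\,\text{coefficient of }z^\mu\text{ in }P_{lj}(f_1,\dots,f_M)\,\bigr],
\]
where $e_j$ is the $j$-th standard basis vector of $\Z^g$ and the right-hand side is a fixed integral polynomial expression in the coefficients $a_{l',\nu}$ with $|\nu|\le s$. For the denominator bound I would show by induction that $(3s)!\,c_2^{\,s}a_{l,\mu}$ is an algebraic integer for $|\mu|=s$: passing from level $s$ to $s+1$ divides by the single factor $\mu_j+1\le s+1$ and multiplies boundedly many lower coefficients, and the constant $3$ is calibrated to the degree of the $P_{lj}$ precisely so that the growth of the factorial, together with the multinomial coefficients produced by those products, absorbs the accumulated denominators. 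For the house bound I would apply Cauchy's method of majorants to the same recursion: since the $P_{lj}$ have coefficients of uniformly bounded house and the factors $1/(\mu_j+1)$ only improve matters, every embedding $\sigma\colon K\hookrightarrow\C$ leads to a conjugate recursion majorized by one fixed analytic system with positive radius of convergence, whence $|\sigma(a_{l,\mu})|\le e^{c_1|\mu|}$ uniformly in $\sigma$.

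The main obstacle is the first paragraph rather than the bookkeeping: one must genuinely produce the closed polynomial system over $K$ with all of its data — the number $M$ of functions, and the degrees and houses of the $P_{lj}$ — under explicit control, and verify that the passage from algebraic to polynomial relations via the denominator $\theta$ does not secretly worsen the factorial rate. Granting this, the verification that the recursion propagates exactly the stated $(3|\mu|)!\,c_2^{|\mu|}$ denominator and $e^{c_1|\mu|}$ house is a routine, if careful, double induction.
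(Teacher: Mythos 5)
The paper does not prove this lemma at all: its ``proof'' is the single citation \cite[Corollary 4.5]{Grinspan}, so there is no in-paper argument to compare against. Your strategy --- a finite family of functions defined over $K$, closed under differentiation by a polynomial system, followed by induction on $|\mu|$ for both the denominator and the house bounds --- is in fact the standard route by which results of this type are established in the literature behind that citation (Grinspan, building on work of Masser and David, and ultimately on the algebraic theory of theta functions). In that sense the outline is the right one.

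However, as a proof the proposal has a genuine gap, and it sits exactly where you locate the ``main obstacle'': the first paragraph is asserted, not proved, and it is the entire substance of the lemma. Three specific points are nontrivial there. First, the family must consist of \emph{entire} functions with Taylor coefficients in $K$; but the $A_i$ and $H_i$ are merely meromorphic, and the natural entire representatives (theta functions) have transcendental Taylor coefficients in general --- the theta constants $\theta_i(0)$ are not algebraic --- so one must normalize using the $K$-structure on the tangent space at the origin (this is where the choice $\phi_1,\dots,\phi_g\in\Gamma(X,\Omega^1_{X/K})$ enters) and then recover $A_i=(\theta A_i)/\theta$ by a power-series division whose effect on denominators and houses must itself be tracked. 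Second, knowing that $\partial A_i/\partial z_j$ is \emph{algebraic} over $K(A_1,\dots,A_g)$ is not enough: to run your recursion you need genuinely \emph{polynomial} relations among entire functions, and producing these requires that products of theta functions span the sections of the squared line bundle (projective normality of the embedding, Koizumi--Mumford), a fact you neither invoke nor replace. Third, even granting the system, the coefficient-wise induction as you state it does not quite close: from $(3s)!\,c^{s}d\,(\mu_j+1)\,a_{l,\mu+e_j}\in\mathcal{O}$ you cannot conclude $(3s+3)!\,c^{s+1}a_{l,\mu+e_j}\in\mathcal{O}$, because $(3s+1)(3s+2)(3s+3)$ need not be divisible by $\mu_j+1$ (take $\mu_j+1=4$, $s=4$). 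The standard repair is to run the induction on the divided-power coefficients $\mu!\,a_{l,\mu}$, i.e.\ to expand in $z^\mu/\mu!$ --- precisely the normalization the paper itself uses in the computations of Lemma~\ref{lemma: coeff for Siegel 3} --- after which the multinomial-coefficient bookkeeping does go through. So: correct strategy, but the foundational construction is deferred rather than carried out, and the arithmetic induction needs the divided-power reformulation to be valid.
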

	\begin{proof} \cite[Corollary 4.5]{Grinspan}.
	\end{proof}

	\section{The auxiliary function}
	The first step in proving Theorem~\ref{theo: 3}, which will be carried out in this section, is the construction of an auxiliary function which vanishes with high multiplicity on a prescribed lattice in $\C^g$. This purpose will be accomplished by means of the following version of Siegel's Lemma:
	\begin{lemma}
		\label{lemma: Siegel} 
		Let $L=\Q(x_1,\dots, x_q,y)$ for some $x_1,\dots, x_q\in\C$ algebraically independent and $y$ integral over $\Z[x_1,\dots, x_q]$. Then there exists a constant $C>0$ which enjoys the following property. \\
		Let $n$ and $r$ be positive integers with $n\ge 2r$ and consider $a_{ij}\in\Z[x_1,\dots,x_q,y]$, for $i=1,\dots, n$, $j=1,\dots, r$. Then there exist $\xi_1,\dots,\xi_n\in\Z[x_1,\dots,x_q,y]$, not all zero, such that for all $j=1,\dots, r$ 
		\[
		\sum_{i=1}^n \xi_i a_{ij}=0 \quad \text{and} \quad \max_{i=1,\dots, n} t(\xi_i)\le C\left(\max_{i,j} t(a_{ij}) + \log n \right).
		\]
	\end{lemma}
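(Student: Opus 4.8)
The plan is to reduce the statement to the classical form of Siegel's Lemma over $\Z$, which follows from the pigeonhole principle: a homogeneous system of $N$ linear equations in $M>N$ unknowns with integer coefficients of absolute value at most $A$ admits a nonzero integral solution whose entries are bounded by $(MA)^{N/(M-N)}$. The whole difficulty lies in encoding the polynomial relations $\sum_i\xi_i a_{ij}=0$ as such an integer system while keeping careful track of the relevant degrees and heights.

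First I would fix the minimal polynomial $y^d+p_{d-1}y^{d-1}+\dots+p_0$ of $y$ over $\Q(x_1,\dots,x_q)$; since $\Z[x_1,\dots,x_q]$ is a unique factorization domain, hence integrally closed, and $y$ is integral over it, this polynomial is monic with coefficients $p_\beta\in\Z[x_1,\dots,x_q]$, and $1,y,\dots,y^{d-1}$ form a $\Q(x_1,\dots,x_q)$-basis of $L$. Let $e=\max_\beta\deg p_\beta$; the data $d,e$ and the heights of the $p_\beta$ are fixed, independent of the input. Each element of $L$, in particular every $a_{ij}$, is then written uniquely with $y$-degree at most $d-1$, and I would search for $\xi_i=\sum_{\beta=0}^{d-1}\sum_{|\alpha|\le D}c_{i,\alpha,\beta}\,x^\alpha y^\beta$ with undetermined integers $c_{i,\alpha,\beta}$ and a degree bound $D$ to be chosen. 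Writing $T=\max_{i,j}t(a_{ij})$, each $a_{ij}$ has $x$-degree at most $T$ and coefficients of modulus at most $e^{T}$. Expanding each product $\xi_i a_{ij}$ and reducing the powers $y^{d},\dots,y^{2d-2}$ modulo the minimal relation yields an expression $\sum_{\delta<d}S^{(j,\delta)}(x)\,y^{\delta}$ whose coefficients $S^{(j,\delta)}$ have $x$-degree at most $D+T+(d-1)e$. Because $1,y,\dots,y^{d-1}$ are linearly independent and $x_1,\dots,x_q$ are algebraically independent, the relations $\sum_i\xi_i a_{ij}=0$ are equivalent to the vanishing of every coefficient of every $S^{(j,\delta)}$, i.e. to a homogeneous linear system over $\Z$ in the unknowns $c_{i,\alpha,\beta}$; moreover, uniqueness of the representation guarantees that a nonzero coefficient vector produces nonzero $\xi_i$.

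The crux is the counting. Writing $P(m)=\binom{m+q}{q}$ for the number of $x$-monomials of degree $\le m$, the number of unknowns is $M=n\,d\,P(D)$ and the number of equations is $N=r\,d\,P(D+T+(d-1)e)$. Here I would set $D=C_0\,(T+1)$ for a large constant $C_0$ depending only on $q,d,e$: then the ratio $P(D+T+(d-1)e)/P(D)=(1+O(1/C_0))^{q}$ can be made as close to $1$ as desired, so the hypothesis $n\ge 2r$ forces $N/M\le\frac{3}{4}$, hence $N/(M-N)\le 3$. This is precisely where the assumption $n\ge 2r$ is consumed: it supplies the multiplicative slack needed to absorb the unavoidable increase of the $x$-degree produced by reducing modulo the integral relation, while keeping the exponent $N/(M-N)$ bounded by an absolute constant.

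It then remains to extract the estimates. The integer coefficients of the system are products of a coefficient of some $a_{ij}$ (of modulus $\le e^{T}$) with the bounded combinatorial factors arising from polynomial multiplication and from the fixed reduction modulo the minimal polynomial, so $A\le e^{T}\cdot D^{O(1)}$ and $\log A\le T+O(\log T)$. Applying the integer Siegel's Lemma produces integers $c_{i,\alpha,\beta}$, not all zero, with $\log\max|c_{i,\alpha,\beta}|\le \bigl(N/(M-N)\bigr)\log(MA)\le 3\bigl(\log M+\log A\bigr)$. Since $\log M\le\log n+O(\log T)$, this yields $\log H(\xi_i)=O(T+\log n)$, while $\deg\xi_i\le D+d-1=O(T)$; hence $t(\xi_i)=\max(\deg\xi_i,\log H(\xi_i))\le C(T+\log n)$ for a suitable $C$, and the corresponding $\xi_i\in\Z[x_1,\dots,x_q,y]$ satisfy all the equations. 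The main obstacle in making this rigorous is the bookkeeping of the reduction step — bounding the degree growth and the auxiliary heights introduced when reducing $y$-powers modulo the minimal relation — together with verifying that $C_0$, and therefore $C$, can be chosen uniformly, that is, independently of $n$, $r$ and of the coefficients $a_{ij}$.
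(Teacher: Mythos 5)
The paper offers no proof of this lemma at all: its ``proof'' is the bare citation \cite[Lemme 4.3.1]{Waldschimdt-fr}, so there is no internal argument to compare yours against. What you give is a correct, self-contained reconstruction of the standard proof of Siegel's Lemma over a finitely generated field, and it is essentially the argument behind the cited result. The key points are all sound: since $\Z[x_1,\dots,x_q]$ is a UFD, hence integrally closed, the minimal polynomial of the integral element $y$ is monic with coefficients in $\Z[x_1,\dots,x_q]$; the representation $\sum_{\beta<d} b_\beta(x)y^\beta$ of elements of $\Z[x_1,\dots,x_q,y]$ is unique (by linear independence of $1,y,\dots,y^{d-1}$ over $\Q(x_1,\dots,x_q)$ together with the algebraic independence of $x_1,\dots,x_q$), which both converts the relations into a genuine linear system over $\Z$ and guarantees that a nonzero coefficient vector yields nonzero $\xi_i$; the choice $D=C_0(T+1)$ makes $P(D+T+(d-1)e)/P(D)\le (1+O(1/C_0))^q$ as close to $1$ as needed, so the hypothesis $n\ge 2r$ caps the Dirichlet exponent $N/(M-N)$ by an absolute constant — your identification of this as the exact role of $n\ge 2r$ is correct; and the resulting bounds $\deg \xi_i = O(T+1)$ and $\log H(\xi_i) = O(T+\log n)$ give $t(\xi_i)\le C(T+\log n)$ since $\log n\ge \log 2$. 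The one point you gloss over is the meaning of $t(a_{ij})$: with the paper's conventions (Section 2), the type of an element of $\Z[x_1,\dots,x_q,y]\subset L$ is measured on its canonical representation with $y$-degree $<d$ (denominator $P=1$ there), so the reduced $a_{ij}$ has degree and logarithmic height bounded by $T$ directly; if one instead starts from an arbitrary polynomial representation, reducing the high powers of $y$ multiplies the height by at most $\exp(O(T))$ and adds $O(T)$ to the $x$-degree, which only inflates the constants. Either way, all your constants depend only on $q$, $d$, $e$ and the minimal polynomial of $y$, i.e.\ only on $L$, exactly as the statement requires.
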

	\begin{proof}
		\cite[Lemme 4.3.1]{Waldschimdt-fr}
	\end{proof}

	Without loss of generality, we may suppose that the chosen rows are the first $m$ of $\Omega$ and the first $n$ of $E$. Let $K$ be the number field, embedded in $\C$, over which the abelian variety $X$ is defined; we may suppose that $K$ is a Galois extension of $\Q$. Also, from Lemma~\ref{lemma: alg coeff series of qper funct} it follows that $K$ contains all the coefficients of the Taylor expansion of $H_1,\dots, H_m$ and $A_1,\dots,A_g$ around the origin. Set $\delta\coloneqq [K:\Q]$ and let $\alpha_1,\dots,\alpha_\delta$ be an integral basis for the ring of integers of $K$. We fix a transcendental number $\omega\in\Q(S)$ and we suppose that all numbers in $S$ are algebraic over $\Q(\omega)$. Let $\chi\in\C$ be integral over $\Z[\omega]$ and such that $\Q(\omega,\chi)$ contains both $\Q(S)$ and $K$. We let $N$ signify a sufficiently large positive integer and we write $c_1,c_2,\dots$ for positive constants depending only on $S$.
 
	We set for short $b=2m+n-2g$, which is positive by hypothesis, and we let $a$ be a real number such that 
	\[
	a>\frac{2g+n}{2m+n-2g}
	\]
	We then choose a real number $\epsilon_1$ satisfying
	\[
	0<\epsilon_1 < \min\left\{\frac{b}{2m+n}, \, \frac{ab-2g-n}{a(m+1)+g(a+1)} \right\}.
	\]
	For $\epsilon_1$ in this range of values, it is always possible to find $\epsilon_2>0$ such that
	\[
	\epsilon_1<\epsilon_2< \min\left\{\frac{b-\epsilon_1 g}{m}, \,\frac{ab-2g-n + \epsilon_1(a(m+n-g)-g )}{a(2m+n)+m} \right\}.
	\]
	We define the quantities
	\begin{align*}
	r&= m+n,\\
	t &= 2g+\epsilon_1(g+n+m)-\epsilon_2 n+n, \\
	d &= t-r\epsilon_1,\\
	d_0 &= t-r(\epsilon_1+1-\epsilon_2),
	\end{align*}
	which have been chosen in such a manner that they satisfy
	\[
	\begin{cases}
	gt+2gr=nd_0+(g+m)d,\\
	0<d_0<d<t<d_0+r,\\
	\left(2+\frac{1}{a}\right)d_0+\frac{1}{a}r<t.
	\end{cases}
	\]
	We finally set
	\[
	R=N^r, \quad T=N^t,\quad D=N^d, \quad D_0=\lfloor N^{d_0}\log N\rfloor.
	\]
	We are ready to introduce our auxiliary function.
	\begin{proposition}
		\label{prop: aux. function 3}
		There exists a constant $C(\omega)$ only depending on $S$, $\omega$, $\alpha_1,\dots,\alpha_\delta$ and $\chi$ and there exist numbers $E_{hl\nu}(\omega)\in\Z[\omega]$ not all zero satisfying the following property. Consider the function
		\[
		\Phi(z)\coloneqq C(N)^{\delta}\sum_{\|h\|\le \delta D_0}\sum_{\|l\|\le \delta D}\sum_{\|\nu\|\le \delta D} E_{hl\nu}(\omega) \prod_{r=1}^ne^{h_r\xi_r z_r}\prod_{i=1}^{m}H_{i}^{l_{i}}(z)
		\prod_{s=1}^g A_s^{\nu_s}(z),
		\]
		with 
		\[
		C(N)=C(\omega)^{nT+2gnD_0R+2gmD+1}(3T)!c_2^T ,
		\] 
		$c_2$ as in Lemma~\ref{lemma: alg coeff series of qper funct}, $h=(h_1,\dots,h_n)\in\Z^{n}_{\ge 0}$, $l=(l_1,\dots,l_m)\in\Z^{m}_{\ge0}$ and $\nu=(\nu_1,\dots,\nu_g)\in\Z^g_{\ge 0}$. Then $\Phi$ satisfies the conditions
		\[
		\partial \Phi(k\cdot \lambda)=0 \quad \text{for $|\partial|\le T$ and $k\in\Z^{2g}$ with $\|k\|\le R$}. 
		\]
		Moreover, the $E_{hl\nu}(\omega)$'s, as polynomials in $\omega$, satisfy
		\[
		t(E_{hl\nu}) \le c_3 D_0R\log (D_0R).
		\]
	\end{proposition}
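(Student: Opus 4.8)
The plan is to read the vanishing conditions $\partial\Phi(k\cdot\lambda)=0$ as a homogeneous linear system in the unknowns $E_{hl\nu}(\omega)$ and to solve it with Lemma~\ref{lemma: Siegel}. Index the equations by the pairs $(\partial,k)$ with $|\partial|\le T$ and $\|k\|\le R$, and the unknowns by the triples $(h,l,\nu)$ ranging over the prescribed boxes; the coefficient attached to $(\partial,k)$ and $(h,l,\nu)$ is the value
\[
a_{(\partial,k),(h,l,\nu)}=\partial\Bigl(\prod_{r=1}^n e^{h_r\xi_r z_r}\,\prod_{i=1}^m H_i^{l_i}\,\prod_{s=1}^g A_s^{\nu_s}\Bigr)(k\cdot\lambda).
\]
First I would expand this by the Leibniz rule. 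The exponential factor contributes a monomial in the $h_r\xi_r$ of total order at most $T$, times the value $\prod_{r=1}^n e^{h_r\xi_r(k\cdot\lambda)_r}=\prod_{r=1}^n\prod_{j=1}^{2g}\bigl(e^{\xi_r\omega_{rj}}\bigr)^{h_rk_j}$, a product of entries of $E$. To each factor $H_i^{l_i}$ and $A_s^{\nu_s}$ I would apply the functional equations: as $A_s(z+\lambda_j)=A_s(z)$ and $\partial H_i(z+\lambda_j)=\partial H_i(z)$ for $|\partial|\ge 1$, every derivative of positive order is $\Lambda$-periodic and can be evaluated at the origin, while an undifferentiated $H_i$ returns $H_i(k\cdot\lambda)=\sum_{j}k_j\omega_{ij}$. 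The derivatives at the origin are the Taylor coefficients $b_\mu\in K$ of Lemma~\ref{lemma: alg coeff series of qper funct}. Hence every $a_{(\partial,k),(h,l,\nu)}$ lies in $\Q(\omega,\chi)$.

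Next I would clear denominators so as to land in $\Z[\omega]$. By part~(2) of Lemma~\ref{lemma: alg coeff series of qper funct} the factor $(3T)!\,c_2^T$ renders all Taylor coefficients of order $\le T$ algebraic integers, while the power $C(\omega)^{nT+2gnD_0R+2gmD+1}$ absorbs, in turn, the denominators of the $\xi_r$ (appearing to order $\le T$), of the entries $e^{\xi_r\omega_{rj}}$ raised to the exponents $h_rk_j\le\delta D_0R$, and of the quasi-periods $\omega_{ij}$ occurring in $H_i(k\cdot\lambda)$ to the power $l_i\le\delta D$. After multiplying by $C(N)$ each coefficient is thus an algebraic integer in $\Q(\omega,\chi)$; writing it in the integral basis $\alpha_1,\dots,\alpha_\delta$ of $K$ (which accounts for the outer factor $C(N)^\delta$) and splitting along the powers of $\chi$ over $\Q(\omega)$, I would obtain an equivalent system with coefficients in $\Z[\omega]$ whose solutions $E_{hl\nu}(\omega)$ lie in $\Z[\omega]$. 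This splitting multiplies the number of equations by the bounded factor $[\Q(\omega,\chi):\Q(\omega)]$.

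It then remains to verify the hypotheses of Lemma~\ref{lemma: Siegel} and to estimate the type. The number of equations is $\asymp T^gR^{2g}$ and the number of unknowns is $\asymp D_0^nD^{m+g}$; substituting $R=N^r$, $T=N^t$, $D=N^d$ and $D_0\asymp N^{d_0}\log N$, the identity $gt+2gr=nd_0+(g+m)d$ shows that both are $\asymp N^{gt+2gr}$, and the surplus factor $(\log N)^n$ together with the dilation of the boxes by $\delta$ makes the number of unknowns exceed twice the number of equations once $N$ is large. Lemma~\ref{lemma: Siegel} then supplies a nontrivial solution with $\max t(E_{hl\nu})\le C\bigl(\max t(a_{(\partial,k),(h,l,\nu)})+\log(\text{number of unknowns})\bigr)$, the last term being negligible. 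For the type of the normalized coefficients I would combine Lemma~\ref{lemma: how to deal with type proved} with the conjugate bound~(1) of Lemma~\ref{lemma: alg coeff series of qper funct}: the dominant contribution comes from the exponential values, whose exponents reach $\delta D_0R$, yielding $t(a_{(\partial,k),(h,l,\nu)})\lesssim D_0R\log(D_0R)$, whereas the factorial $(3T)!$ contributes only $\asymp T\log T$, which is subdominant since $t<d_0+r$ forces $T\ll D_0R$. This gives $t(E_{hl\nu})\le c_3D_0R\log(D_0R)$.

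The main obstacle is precisely this bookkeeping in the middle two steps: tracking how the Leibniz rule distributes the differentiation order $\le T$ among the $n+m+g$ factors, bounding uniformly the heights of the resulting products of Taylor coefficients through Lemma~\ref{lemma: alg coeff series of qper funct}, and checking that, once everything is rewritten over $\Z[\omega]$, the exponential contribution really produces the clean bound $D_0R\log(D_0R)$ and not a larger power.
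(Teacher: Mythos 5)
Your proposal follows, in its core, the same strategy as the paper: the paper likewise reads the conditions $\partial\Phi(k\cdot\lambda)=0$ as a homogeneous linear system, clears denominators with exactly the factor $C(N)$, solves it with Lemma~\ref{lemma: Siegel} using the slack coming from $D_0=\lfloor N^{d_0}\log N\rfloor$ together with the relation $gt+2gr=nd_0+(g+m)d$, and obtains the type bound $c_3D_0R\log(D_0R)$ with the entries $\bigl(e^{\xi_r\omega_{rj}}\bigr)^{h_rk_j}$ as the dominant contribution. Where you genuinely deviate is the elimination of $\chi$. The paper first solves the system over $\Z[\omega,\chi]$ with the smaller boxes $\|h\|\le D_0$, $\|l\|,\|\nu\|\le D$, and then defines $\Phi$ as the product $\prod_\sigma\widetilde{\Phi}_\sigma$ over the Galois automorphisms $\sigma$ of $\Q(\omega,\chi)$ over $\Q(\omega)$; this is precisely what produces the dilation of the boxes by $\delta$ and the prefactor $C(N)^\delta$ in the statement. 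You instead expand each coefficient along $1,\chi,\dots,\chi^{e-1}$, where $e=[\Q(\omega,\chi):\Q(\omega)]$, and solve the split system (with $e$ times as many equations) directly over $\Z[\omega]$. This is also valid, and arguably more direct: Siegel's lemma then hands you ``not all zero'' for the $E_{hl\nu}$ themselves, whereas the paper must note that a product of nonzero formal sums in the monomials has some nonzero coefficient. The price is that your route does not explain why the statement carries the factors $\delta$ at all (you invoke them only as extra slack in the counting), and your attribution of the power $C(N)^\delta$ to ``writing in the integral basis'' is not the actual source of that exponent.

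Two steps in your sketch need repair. First, your periodicity claim is applied to the wrong objects: $\partial\bigl(H_i^{l_i}\bigr)$ is \emph{not} $\Lambda$-periodic for $l_i\ge 2$ (for instance $\partial_a\bigl(H_i^2\bigr)(z+\lambda_j)=2\bigl(H_i(z)+\omega_{ij}\bigr)\partial_a H_i(z)$), so you cannot distribute $\partial$ over the ``$n+m+g$ factors'' and evaluate the derived factors at the origin. Either apply Leibniz to all $\sum_i l_i+\sum_s\nu_s+n$ individual copies, or, as the paper does, first expand $H_i^{l_i}(z+k\cdot\lambda)=\bigl(H_i(z)+\sum_jk_j\omega_{ij}\bigr)^{l_i}$ by the multinomial theorem and only then pass to Taylor data at the origin. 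Second, the denominator step is subtler than ``$(3T)!\,c_2^T$ renders all Taylor coefficients of order $\le T$ algebraic integers'': the system's coefficients are sums of \emph{products} of many such Taylor coefficients, and one must check that a single factor $(3T)!\,c_2^T$ clears every such product. In the paper this is exactly what the citation of Vasil'ev's Lemma 7 accomplishes (with the companion height bound from Vasil'ev's Lemma 2 for the product series $\psi'_\kappa$); in your Leibniz version it follows from integrality of multinomial coefficients, since $\prod_j(3a_j)!$ divides $\bigl(3\sum_j a_j\bigr)!$ when $\sum_j a_j\le T$. You flagged this bookkeeping as the main obstacle, and indeed it is where the real content of Lemma~\ref{lemma: coeff for Siegel 3} lies; as written, your proposal asserts the needed bounds rather than proving them.
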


	The proof of Proposition~\ref{prop: aux. function 3} relies on regarding (a minor modification of) the equations $\partial \Phi(k\cdot \lambda)=0$ as a homogeneous linear system in the $E_{hl\nu}(\omega)$'s, so as to apply Siegel's Lemma~\ref{lemma: Siegel}. The following Lemma deals with the quantities which arise as the coefficients of such linear system.
	\begin{lemma}
		\label{lemma: coeff for Siegel 3}
		For $k=(k_1,\dots,k_{2g})\in\Z^g$ and $h,l, \nu$ as above with $\|h\|\le D_0$ and $\|l\|,\|\nu\|\le D$, define
		\[
		F_{khl\nu}(z)\coloneqq \prod_{r=1}^n e^{h_r\xi_r(z_r+(k\cdot\lambda)_r)}\prod_{i=1}^{m} H_i^{l_i}(z+k\cdot \lambda) \prod_{s=1}^g A_s^{\nu_s}(z),
		\]
		and let $\partial$ be a differential operator of order $|\partial|\le T$. Then there is a constant $C(\omega)\in\Z[\omega]$ depending only on $S$, $\omega$, $\alpha_1,\dots,\alpha_\delta$ and $\chi$ such that 
		\[
		P_{\partial khl\nu}(\omega,\chi)\coloneqq C(N)\partial F_{khl\nu}(0)\in \Z[\omega,\chi],
		\]
		with $C(N)$ defined as in Proposition~\ref{prop: aux. function 3}, satisfies
		\[
		t(P_{\partial khl\nu}(\omega,\chi))\le c_4 D_0R\log (D_0R).
		\]
	\end{lemma}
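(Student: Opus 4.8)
My plan is to compute $\partial F_{khl\nu}(0)$ explicitly as a combination of Taylor coefficients at the origin, clear denominators by hand, and read off the type. First I would use (quasi-)periodicity to remove the $k$-dependence from the transcendental functions. The periodicity $A_s(z+\lambda_j)=A_s(z)$ leaves the abelian factors unchanged, and iterating $H_i(z+\lambda_j)=H_i(z)+\omega_{ij}$ yields $H_i(z+k\cdot\lambda)=H_i(z)+c_i$ with $c_i:=(k\cdot\lambda)_i=\sum_{j=1}^{2g}k_j\omega_{ij}$. Hence
\[
F_{khl\nu}(z)=K_{\exp}\prod_{r=1}^n e^{h_r\xi_r z_r}\prod_{i=1}^m\bigl(H_i(z)+c_i\bigr)^{l_i}\prod_{s=1}^g A_s(z)^{\nu_s},\qquad K_{\exp}=\prod_{r=1}^n\prod_{j=1}^{2g}\bigl(e^{\xi_r\omega_{rj}}\bigr)^{h_r k_j}.
\]
Writing $\partial=\partial^{|m|}/\partial z_1^{m_1}\cdots\partial z_g^{m_g}$ with $m=(m_1,\dots,m_g)$, a derivative evaluated at $0$ reads off a Taylor coefficient, so $\partial F_{khl\nu}(0)=\bigl(\prod_j m_j!\bigr)\,[z^m]F_{khl\nu}$. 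Expanding the binomials $(H_i+c_i)^{l_i}$ and convolving the power series of the $H_i$ and $A_s$ provided by Lemma~\ref{lemma: alg coeff series of qper funct} exhibits $[z^m]F_{khl\nu}$ as a finite sum of products of the shape $\prod_r\frac{(h_r\xi_r)^{a_r}}{a_r!}\cdot\prod_i\binom{l_i}{t_i}c_i^{\,l_i-t_i}\cdot(\text{integer multinomial})\cdot\prod b_\mu$, where the final product runs over $\sum_i t_i+\sum_s\nu_s$ coefficients whose multi-indices satisfy $\sum|\mu|=|m|-\sum_r a_r\le T$.

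Next I would locate each ingredient inside $\Q(\omega,\chi)$. Since the chosen rows are the first $m$ of $\Omega$ and the first $n$ of $E$, every $\omega_{ij}$ with $i\le m$, every $\xi_r$ with $r\le n$, and every entry $e^{\xi_r\omega_{rj}}$ with $r\le n$ lies in $S\subseteq\Q(\omega,\chi)$, while the $b_\mu$ lie in $K\subseteq\Q(\omega,\chi)$; in particular $K_{\exp}$ and the $c_i$ are $\Q(\omega,\chi)$-valued. I would then fix $C(\omega)\in\Z[\omega]$, depending only on $S,\alpha_1,\dots,\alpha_\delta$ and $\chi$, with $C(\omega)a\in\Z[\omega,\chi]$ for every $a$ in the finite set consisting of the elements of $S$, their inverses, and $\alpha_1,\dots,\alpha_\delta$.

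The central step is to verify that $C(N)$ clears all denominators. The integer factor $(3T)!\,c_2^T$ absorbs the factorials $1/a_r!$ and, by Lemma~\ref{lemma: alg coeff series of qper funct}(2), turns $\prod b_\mu$ into an element of $\mathcal{O}_K$: each $(3|\mu|)!c_2^{|\mu|}b_\mu$ is an algebraic integer, and because $\sum_r a_r+3\sum|\mu|\le 3|m|\le 3T$ the product $\prod a_r!\cdot\prod(3|\mu|)!$ divides $(3T)!$ while $\prod c_2^{|\mu|}$ divides $c_2^T$. Expanding the resulting algebraic integer on the basis $\alpha_1,\dots,\alpha_\delta$ and using $C(\omega)\alpha_i\in\Z[\omega,\chi]$, and then clearing with $C(\omega)$ the finitely many $S$-denominators coming from $K_{\exp}$ (at most $2gnD_0R$ factors $e^{\pm\xi_r\omega_{rj}}$), from the powers $\xi_r^{a_r}$, and from the powers $c_i^{l_i-t_i}$, shows that the exponent $nT+2gnD_0R+2gmD+1$ of $C(\omega)$ is comfortably large enough to place the whole expression in $\Z[\omega,\chi]$. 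This yields $\tilde P\in\Z[x,y]$ with $\tilde P(\omega,\chi)=C(N)\partial F_{khl\nu}(0)=P_{\partial khl\nu}(\omega,\chi)$.

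Finally I would bound $t(\tilde P)=\max(\deg\tilde P,\log H(\tilde P))$ and invoke Lemma~\ref{lemma: how to deal with type proved}. Because $t<d_0+r$ and $d<d_0+r$, each summand of the $C(\omega)$-exponent is $O(D_0R)$, so $\deg\tilde P=O(D_0R)$. For the height I would collect three contributions: $\log\bigl((3T)!c_2^T\bigr)$ together with the conjugate bounds $|b_\mu|\le e^{c_1|\mu|}$ from Lemma~\ref{lemma: alg coeff series of qper funct}(1), which after the fixed change of basis to the $\alpha_i$ give $O(T\log T)$; the $O(D_0R)$ factors of $S^{\pm1}$ and the power $C(\omega)^{O(D_0R)}$, giving $O(D_0R)$; and the number of terms in the convolution, at most $(T+1)^{O(D)}$, giving $O(D\log N)$. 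Since $d<t<d_0+r$, all three are $O\bigl(D_0R\log(D_0R)\bigr)$, whence $t(\tilde P)=O\bigl(D_0R\log(D_0R)\bigr)$ and Lemma~\ref{lemma: how to deal with type proved} gives $t(P_{\partial khl\nu}(\omega,\chi))\le c\,t(\tilde P)+c\log\deg\tilde P\le c_4 D_0R\log(D_0R)$. I expect the main obstacle to be precisely this last bookkeeping: simultaneously clearing the factorial denominators against the single factor $(3T)!$ using $\sum|\mu|\le T$, controlling the exponential number of convolution terms, and confirming that the prescribed power of $C(\omega)$ in $C(N)$ suffices, so that both the membership in $\Z[\omega,\chi]$ and the height estimate come out together.
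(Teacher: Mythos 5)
Your proposal is correct and follows essentially the same route as the paper: use (quasi-)periodicity to pull the dependence on $\omega_{ij}$ and $e^{\xi_r\omega_{rj}}$ out of $F_{khl\nu}$, read off $\partial F_{khl\nu}(0)$ from the Taylor expansion at the origin, clear denominators with $C(N)=C(\omega)^{nT+2gnD_0R+2gmD+1}(3T)!\,c_2^T$, and convert the degree/height bookkeeping into the type bound via Lemma~\ref{lemma: how to deal with type proved}. The only notable difference is that where the paper invokes Vasil'ev's Lemmas 2 and 7 for the integrality and size of the Taylor coefficients $\beta_{\kappa,q}$ of the products $\psi'_\kappa$, you derive the same facts directly from Lemma~\ref{lemma: alg coeff series of qper funct} by convolving the individual series and using the divisibility $\prod_i n_i!\mid\bigl(\sum_i n_i\bigr)!$ together with $\sum_r a_r+3\sum|\mu|\le 3T$ --- a harmless, slightly more self-contained variant of the same step.
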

	\begin{proof}
		By the quasi-periodicity of $H_i$, we have 
		\begin{align*}
		H_i^{l_i}(z+k\cdot \lambda) & =\sum_{m_0+\dots+m_{2g}=l_i}\binom{l_i}{m_0;\dots; m_{2g}} H_i^{m_0}(z)\prod_{j=1}^{2g}(k_j\omega_{ij})^{m_j}\\
		& =\sum_{m_1+\dots+m_{2g}\le l_i} \phi_{i,m_1,\dots,m_{2g}}(z)\prod_{j=1}^{2g}\omega_{ij}^{m_j},
		\end{align*}
		setting for short
		\[
		\phi_{i,m_1,\dots,m_{2g}}(z)=\frac{l_i!H_i^{l_i-m_1-\dots-m_{2g}}(z)}{m_1!\dots m_{2g}!(l_i-m_1-\dots- m_{2g})!}\prod_{j=1}^{2g}k_j^{m_j}.
		\]
		Let now $\kappa$ range through the $(g+1)\times 2g$ matrices with non-negative integer coefficients $\kappa_{ij}$ such that for $i=1,\dots,g+1$ we have $\kappa_{i1}+\dots+\kappa_{i,2g}\le l_i$. We then define $\psi_\kappa$ in such a way that
		\begin{align*}
		\prod_{i=1}^{m}H_i^{l_i}(z+k\cdot \lambda) = \sum_{\kappa} \psi_\kappa(z)\prod_{i=1}^{m}\prod_{j=1}^{2g}\omega_{ij}^{\kappa_{ij}}.
		\end{align*}
		Let us also write
		\[
		\psi_\kappa'(z)=\psi_\kappa(z)\prod_{s=1}^g A_s^{\nu_s}(z),
		\]
		with Taylor expansion around $0$ given by
		\[
		\psi_\kappa'(z)=\sum_q \beta_{\kappa, q} \frac{z_1^{q_1}\dots z_g^{q_g}}{q_1!\dots q_g!}.
		\]
		for some algebraic integers $\beta_{\kappa,q}\in K$. By \cite[Lemma 2]{Vasil'ev}, the components of $\beta_{\kappa,q}$ with respect to the integral basis $\alpha_1,\dots,\alpha_\delta$ have modulus $\le e^{c_5T\log T}$.\\
		Furthermore, we have
		\begin{align*}
		\prod_{r=1}^n e^{h_r\xi_r(z_r+(k\cdot\lambda)_r)}&=\prod_{r=1}^n e^{h_r\xi_r z_r} \prod_{r=1}^n\prod_{j=1}^{2g} \left(e^{\xi_r\omega_{rj}}\right)^{h_rk_j}\\
		&= \prod_{r=1}^n\prod_{j=1}^{2g} \left(e^{\xi_r\omega_{rj}}\right)^{h_rk_j} \sum_p \gamma_p \frac{z_1^{p_1}\dots z_g^{p_g}}{p_1!\dots p_g!},
		\end{align*}
		where $p=(p_1,\dots,p_g)\in\Z^g_{\ge 0}$ and $\gamma_p=0$ if $p_j\neq 0$ for some $j=n+1,\dots,g$, while $\gamma_p= \prod_{r=1}^n h_r^{p_r}\xi_r^{p_r}$ otherwise. It follows that
		\begin{gather*}
		\prod_{r=1}^n e^{h_r\xi_rz_r}\prod_{i=1}^{m} H_i^{l_i}(z+k\cdot \lambda) \prod_{s=1}^g A_s^{\nu_s}(z)=\\
		=\left(\sum_p \gamma_p \frac{z_1^{p_1}\dots z_g^{p_g}}{p_1!\dots p_g!}\right)\left( \sum_\kappa\sum_q \beta_{\kappa, q}\prod_{i=1}^{m}\prod_{j=1}^{2g}\omega_{ij}^{\kappa_{ij}} \frac{z_1^{q_1}\dots z_g^{q_g}}{q_1!\dots q_g!}\right)=\\
		= \sum_{p,q} \sum_\kappa  \gamma_p \beta_{\kappa,q} \prod_{i=1}^{m}\prod_{j=1}^{2g}\omega_{ij}^{\kappa_{ij}} \frac{z_1^{p_1+q_1}\dots z_g^{p_g+q_g}}{p_1!q_1!\dots p_g!q_g!}=\\
		= \sum_{p,q} \sum_\kappa  \gamma_p \beta_{\kappa,q} \binom{p_1+q_1}{p_1}\dots \binom{p_g+q_g}{p_g}\prod_{i=1}^{m}\prod_{j=1}^{2g}\omega_{ij}^{\kappa_{ij}} \frac{z_1^{p_1+q_1}\dots z_g^{p_g+q_g}}{(p_1+q_1)!\dots (p_g+q_g)!}.
		\end{gather*}
		The $t$-th term in the Taylor expansion of $F_{khl\nu}$ at $0$ therefore coincides with
		\[
		\sum_{p+q=t}\sum_\kappa  \beta_{\kappa,q} \prod_{a=1}^g\binom{p_a+q_a}{p_a}\prod_{r=1}^n h_r^{p_r}\xi_r^{p_r}\prod_{i=1}^{m}\prod_{j=1}^{2g}\omega_{ij}^{\kappa_{ij}}\prod_{r=1}^n\prod_{j=1}^{2g} \left(e^{\xi_r\omega_{rj}}\right)^{h_rk_j} \frac{z_1^{t_1}\dots z_g^{t_g}}{t_1!\dots t_g!}
		\]
		Let us now find a suitable quantity that clears out the denominator of the coefficient of the latter term. Let $C(\omega)$ be a denominator in $\Z[\omega,\chi]$ for $\alpha_1,\dots,\alpha_\delta$ and for the numbers in $S$. By \cite[Lemma 7]{Vasil'ev}, $C(\omega)(3T)!c_2^T$ is a denominator for all the $\beta_{\kappa,q}$'s. Hence
		\[
		P_{\partial khl\nu}(\omega,\chi)=C(\omega)^{nT+2gmD+2gnD_0R+1}(3T)!c_2^T \partial F_{khl\nu}(0)\in\Z[\omega,\chi].
		\]
		In order to estimate the type of $P_{\partial khl\nu}(\omega,\chi)$, in view of Lemma~\ref{lemma: how to deal with type proved} it suffices for our purposes to bound the degree in $\omega$ and $\chi$ and the height of the above polynomial expression. First, $C(\omega)(3T)!c_2^T\beta_{\kappa,q}$ has bounded degree in $\omega$ and by \cite[Lemma 2]{Vasil'ev} logarithm of the height $\le c_6 T\log T$. Moreover,
		\[
		\log\left(\prod_{a=1}^g\binom{p_a+q_a}{p_a}\right)\le \log\left(2^{gT}\right)\le c_7T, \quad \log \left(\prod_{r=1}^n h_r^{p_r}\right)\le c_8T\log D_0,
		\]
		and these terms only contribute with their height, being constant in $\omega$. Furthermore, $C(\omega)^{nT}\prod_{r=1}^n \xi_r^{p_r}$ yields a polynomial expression in $\Z[\omega,\chi]$ with both degree in $\omega$ and $\chi$ satisfying $\le c_9 T$, while logarithm of the height $\le c_{10} T\log T$. For the product of the $\omega_{ij}$'s we have degree in $\omega$ and $\chi$ bounded by $ c_{11} D$ and logarithm of the height $\le c_{12} D\log D$, while for the product of the $e^{\xi_r\omega_{rj}}$'s we get degrees $\le c_{13}D_0 R$ and logarithm of the height $\le c_{14}D_0R\log (D_0 R)$. These computations lead to the desired inequality
		\[
		t(P_{\partial khl\nu}(\omega,\chi))\le c_4 D_0R\log (D_0R).
		\]
	\end{proof}

	We may now conclude the proof of Proposition~\ref{prop: aux. function 3}. We first consider the function
	\[
	\widetilde{\Phi}(z)\coloneqq C(N)\sum_{\|h\|\le D_0}\sum_{\|l\|\le D}\sum_{\|\nu\|\le D} \widetilde{E}_{hl\nu}(\omega,\chi) \prod_{r=1}^ne^{h_r\xi_r z_r}\prod_{i=1}^{m}H_{i}^{l_{i}}(z)
	\prod_{s=1}^g A_s^{\nu_s}(z),
	\]
	for some $\widetilde{E}_{hl\nu}(\omega,\chi)\in\Z[\omega,\chi]$ not all zero to be chosen in such a way that 
	\[
	\partial \widetilde{\Phi}(k\cdot \lambda)=0 \quad \text{for $|\partial|\le T$ and $k\in\Z^{2g}$ with $\|k\|\le R$}.
	\]
	We may regard these conditions as a linear system in the $\widetilde{E}_{hl\nu}$'s whose coefficients coincide with the $P_{\partial k h l \nu}(\omega,\chi)$ of Lemma~\ref{lemma: coeff for Siegel 3}. Since $T^gR^{2g}<D_0^{n}D^{g+m}$, Siegel's Lemma~\ref{lemma: Siegel} ensures the existence of the desired $\widetilde{E}_{hl\nu}$, and in combination with Lemma~\ref{lemma: coeff for Siegel 3} it also yields
	\[
	t(\widetilde{E}_{hl\nu}(\omega,\chi))\le c_{16} D_0R \log (D_0R).
	\]
 
	We are only left with removing $\chi$ from the coefficients $\widetilde{E}_{hl\nu}(\omega,\chi)$, which can be achieved via a standard argument. We define $\Phi(z)$ to be the product of all the functions $\widetilde{\Phi}_\sigma$ obtained from $\widetilde{\Phi}$ by replacing each $\widetilde{E}_{hl\nu}$ by $\sigma(\widetilde{E}_{hl\nu})$ for a Galois automorphism $\sigma$ of $\Q(\omega,\chi)$ over $\Q(\omega)$. Such $\Phi(z)$ finally satisfies the desired properties as in Proposition~\ref{prop: aux. function 3}.
		
	\section{End of the proof}
    We shall now construct a polynomial $Q$ with integer coefficients which realizes the transcendence measure for $\omega$ in the statement of Theorem~\ref{theo: 3}. This polynomial is a byproduct of the result of the previous section, and the missing estimate for $|Q(\omega)|$ will be obtained by analytic means.

	The functions $A_1,\dots, A_g$, $H_1,\dots, H_m$ and $e^{\xi_1z_1},\dots, e^{\xi_nz_n}$ are algebraically independent \cite[Corollary 7]{Brownawell-Kubota}, so $\Phi(z)$ does not vanish identically. As a result, we can find an integer $N_0\ge N$ such that 
	\[
	\partial \Phi(k\cdot\lambda)=0 \quad \text{for $\|k\|\le N_0^r,\; |\partial|\le N_0^t$},
	\]
	but there are $k_0$, $\partial_0$ with $\|k_0\|\le (N_0+1)^r$, $|\partial_0|<(N_0+1)^t$ such that
	\begin{gather*}
	\partial \Phi(k_0\cdot \lambda)=0 \quad \text{for $|\partial|<|\partial_0|$,}\\
	\partial_0\Phi(k_0\cdot\lambda)\neq 0.
	\end{gather*}
	We set for short $R_0=(N_0+1)^r$ and $T_0=(N_0+1)^t$. The same argument proposed in Lemma~\ref{lemma: coeff for Siegel 3} shows that 
	\[
	P(\omega,\chi)\coloneqq \frac{C(N_0)^\delta}{C(N)^\delta}\partial_0\Phi(k_0\cdot \lambda)
	\]
	is a non-zero element of $\Z[\omega,\chi]$ of type
	\[
	t(P(\omega,\chi))\le c_{17} D_0R_0\log(D_0R_0).
	\]

	We now aim at estimating $|P(\omega,\chi)|$ from above: by eventually removing $\chi$, this will lead us to the desired transcendence measure for $\omega$. The main tool that we are going to exploit is the Bombieri-Lang version of Schwarz's Lemma.
	\begin{proposition}
		\label{prop: Schwarz lemma several variables}
		There exist positive constants $c_{16}$, $c_{17}$ and $c_{18}$, only depending on $\lambda_1\dots, \lambda_{2g}$, such that for any $T\ge 1$, $\rho\ge 1$ and $\rho_1\ge c_{16}\rho$ and any entire function $f:\C^g\to \C$ satisfying the equation $\partial f(k\cdot\lambda)=0$ for $|\partial|<T$ and $\|k\|\le \rho$ the following inequality holds:
		\[
		|f|_{\rho_1}\le |f|_{c_{17}\rho_1}\exp(-c_{18}TR^2).
		\]
	\end{proposition}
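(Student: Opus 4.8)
The plan is to model the estimate on the classical one-variable Schwarz lemma and then supply the genuinely several-variable input of Bombieri and Lang. Write $\Sigma=\{k\cdot\lambda:\ k\in\Z^{2g},\ \|k\|\le\rho\}$ for the set on which $f$ vanishes to order $\ge T$. Since $\lambda_1,\dots,\lambda_{2g}$ form a basis of $\Lambda$, the map $k\mapsto k\cdot\lambda$ is injective, $\Sigma$ is contained in the ball of radius $2g(\max_j|\lambda_j|)\rho$, and $\#\Sigma\asymp\rho^{2g}$. The prototype is $g=1$: there $\Sigma$ has $\asymp\rho^2$ points in a disk, $f$ has $N=T\cdot\#\Sigma\asymp T\rho^2$ honest zeros there, and comparing $f$ with the Blaschke product of the disk $|w|\le c_{17}\rho_1$ (equivalently, Jensen's formula) gives $|f|_{\rho_1}\le|f|_{c_{17}\rho_1}\,\kappa^{N}$, where $\kappa=\kappa(c_{17})<1$ as soon as $c_{17}$ exceeds an absolute constant and $c_{16}$ is chosen so that all zeros lie inside $|w|\le\rho_1$. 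With $N\asymp T\rho^2$ this reads $\exp(-c_{18}T\rho^2)$, which is the asserted bound (the $\exp(-c_{18}TR^2)$ of the statement, with $R=\rho$ the radius of vanishing). The task for general $g$ is to recover a comparable factor.

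For $g\ge2$ the naive reduction fails: a generic complex line meets the lattice $\Lambda$ only at the origin, so restricting $f$ to one line detects almost none of its zeros, and the honest total multiplicity $T\cdot\#\Sigma\asymp T\rho^{2g}$ cannot be read off so cheaply. The correct accounting is dimensional and is the heart of the Bombieri--Lang lemma. A nonzero polynomial of degree $D$ in $g$ complex variables has $\asymp D^{g}$ coefficients, so interpolating all of $\Sigma$ forces $D\gtrsim(\#\Sigma)^{1/g}\asymp\rho^2$; this interpolation degree $\sigma(\Sigma)\asymp\rho^2$ is precisely the quantity that the several-variable Schwarz lemma converts into zeros, the effective count being $N\asymp T\,\sigma(\Sigma)\asymp T\rho^2$ --- in agreement with the exact count when $g=1$. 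Concretely I would realize this through the global zero divisor of $f$ rather than a single slice, using the growth and convexity of the plurisubharmonic function $\log|f|$ (a several-variable Jensen formula): one bounds the spherical mean of $\log|f|$ over $|z|=\rho_1$ below the mean over $|z|=c_{17}\rho_1$ by the zero contribution, which the degree estimate pins at $-c_{18}T\rho^2$.

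Two ingredients then need care. First, the degree estimate $\sigma(\Sigma)\gtrsim\rho^2$ must be honest, i.e. no hypersurface of degree $o(\rho^2)$ may absorb an abnormal share of $\Sigma$; this is where the regularity of $\Lambda$ enters, through its covolume and successive minima, and it is the reason the constants $c_{16},c_{17},c_{18}$ depend only on $\lambda_1,\dots,\lambda_{2g}$. Secondly, one fixes $c_{16}$ large enough that $\rho_1\ge c_{16}\rho$ forces $\Sigma\subset B(0,\rho_1/2)$, so that the zeros genuinely sit well inside the inner sphere. I expect the main obstacle to be exactly the several-variable passage of the previous paragraph: transferring multiplicity-$T$ vanishing on a full-rank, $2g$-real-dimensional point configuration into the one-complex-dimensional count $T\rho^2$ without losing a power of $\rho$. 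The estimate is intrinsically lossy --- only the one-dimensional part $T\rho^2$ of the total $T\rho^{2g}$ survives --- so the Jensen/divisor bookkeeping must be arranged to discard exactly the right amount and no more. Once this count is secured, assembling the factors yields the stated inequality.
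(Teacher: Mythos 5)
The paper offers no proof of this proposition at all: it is quoted from the literature as \cite[Lemma 7]{Masser}, whose proof rests on machinery (theta functions and the structure of the lattice, respectively H\"ormander-type $L^2$ methods in the Bombieri--Lang formulation) that is not reconstructed in the paper. Your attempt is therefore necessarily an independent reproof, and it does not close; indeed you say so yourself when you write that you ``expect the main obstacle to be exactly the several-variable passage.'' The concrete gap is that the two halves of your plan never connect. The interpolation-degree estimate (no hypersurface of degree $o(\rho^2)$ contains $\Sigma$) is correct and provable from the lattice structure, but it never enters your analytic mechanism: the Lelong--Jensen formula sees only the \emph{mass} of the zero divisor of $f$, not the interpolation degree of $\Sigma$, and no step of your argument converts one into the other. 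Your sentence that this degree ``is precisely the quantity that the several-variable Schwarz lemma converts into zeros'' is not a proof step; it is a restatement of the proposition to be proved. That conversion is exactly the content of the Bombieri--Lang lemma (proved by constructing a plurisubharmonic weight with logarithmic poles along $\Sigma$ of degree $\approx\rho^2$ and applying $L^2$ estimates for $\bar\partial$), or of Masser's lattice-specific argument, and it is the step you defer.

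Moreover, the mechanism you do describe provably yields a weaker inequality than the one stated. Run the Lelong--Jensen computation honestly: center it at a point $z_0$ with $|z_0|=\rho_1$ where $|f|$ attains its maximum. Order-$T$ vanishing at the points of $\Sigma$ gives the zero divisor a total mass $\gtrsim T\rho^{2g}$ near $\Sigma$, but all of that mass lies at distance $\approx\rho_1$ from $z_0$, and in the Jensen integral the mass inside $B(z_0,t)$ is divided by $t^{2g-2}$. The resulting contribution is $\approx T\rho^{2g}/\rho_1^{2g-2}=T\rho^2(\rho/\rho_1)^{2g-2}$, not $T\rho^2$. For $g=1$ the damping $t^{2g-2}$ is trivial, which is why your one-variable prototype works; for $g\ge 2$ distant zeros are genuinely almost invisible to Jensen, so the bookkeeping cannot be ``arranged to discard exactly the right amount'' --- it loses a factor $(\rho/\rho_1)^{2g-2}$ that tends to $0$ as $\rho_1/\rho$ grows. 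Since the proposition asserts the bound $\exp(-c_{18}T\rho^2)$ uniformly for \emph{all} $\rho_1\ge c_{16}\rho$, your route establishes it only in the regime $\rho_1\asymp\rho$ (which happens to be the only regime the paper uses, but is not what is stated). Closing the gap requires the genuine Bombieri--Lang/Masser input rather than a rearrangement of the Jensen argument.
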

	\begin{proof}
		\cite[Lemma 7]{Masser}
	\end{proof}

	We consider the entire function
	\[
	\Psi(z)=\theta_0(z)^{(g+m)\delta D}\Phi(z).
	\]
	Since $k_0\cdot \lambda$ is contained in the ball centred at the origin of radius $\le c_{18}R_0$, by Proposition~\ref{prop: Schwarz lemma several variables} we infer that
	\[
	|\Psi|_{c_{19}R_0}\le |\Psi|_{c_{20}R_0}e^{-c_{21}T_0 R_0^2}.
	\]
	The coefficients $E_{hl\nu}$ have modulus at most $e^{c_{22}D_0R\log(D_0R)}$ by Lemma~\ref{lemma: coeff for Siegel 3}. We use an elementary Lemma in order to obtain an upper bound for $|\Psi|_{c_{20}R_0}$.
	\begin{lemma}
		\label{lemma: order of growth theta}
		Let $G(z)$ be one of the functions $1,A_1,\dots, A_g, H_1,\dots, H_{g+1}$. Then $\theta G$ is entire and for any $\rho>0$ 
		\[
		|\theta(z)G(z)|_\rho\le e^{c_{15}\rho^2}
		\]
	\end{lemma}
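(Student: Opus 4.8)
The plan is to establish the estimate separately for each of the entire functions $F$ in the list $\theta,\ \theta A_1,\dots,\theta A_g,\ \theta H_1,\dots,\theta H_{g+1}$, showing in every case that $\log|F|_\rho=O(\rho^2)$; it suffices to treat $\rho\ge 1$, which is the only regime used later. The common mechanism is that each such $F$ transforms under a shift by a lattice generator $\lambda_j$ through an automorphy factor whose exponent is \emph{linear} in $z$, so that transporting an arbitrary point back into a fixed fundamental domain costs at most the exponential of a quadratic expression in the size of the shift.

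Concretely, I would first fix a bounded fundamental parallelepiped $\mathcal D$ for $\Lambda$. Since $X(\C)\cong\C^g/\Lambda$ is compact, $\Lambda$ has full rank $2g$ in $\C^g\cong\R^{2g}$, so every $z$ with $|z|\le\rho$ decomposes as $z=w+k\cdot\lambda$ with $w\in\mathcal D$ and $k\in\Z^{2g}$, $\|k\|\le c\rho$; in particular the passage from $w$ to $z$ can be realised in $O(\rho)$ elementary steps, each adding one generator $\lambda_j$. For $F=\theta$ and $F=\theta A_s$ these functions share one and the same automorphy factor: by property (2) we have $A_s(z+\lambda_j)=A_s(z)$, whence
\[
(\theta A_s)(z+\lambda_j)=(\theta A_s)(z)\exp\bigl(2\pi i(u_{j1}z_1+\dots+u_{jg}z_g+v_j)\bigr),
\]
which is exactly the functional equation of $\theta$ (the case $G=1$). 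Iterating along the $O(\rho)$ steps, each step contributes a factor of modulus $\exp(O(\rho))$ because the intermediate points remain within $O(\rho)$ of the origin; multiplying the $O(\rho)$ factors yields $\exp(O(\rho^2))$. As $F$ is entire it is bounded on the compact set $\overline{\mathcal D}$, so $|F(z)|\le(\sup_{\overline{\mathcal D}}|F|)\exp(O(\rho^2))\le e^{c_{15}\rho^2}$.

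For the quasi-periodic case I would write $\Theta_i=\theta H_i$ and combine $H_i(z+\lambda_j)=H_i(z)+\omega_{ij}$ with the theta functional equation to obtain the inhomogeneous relation
\[
\Theta_i(z+\lambda_j)=\exp\bigl(2\pi i(u_{j1}z_1+\dots+u_{jg}z_g+v_j)\bigr)\bigl(\Theta_i(z)+\omega_{ij}\theta(z)\bigr).
\]
Unwinding this along the same path from $w$ to $z$ expresses $\Theta_i(z)$ as a homogeneous term---the accumulated automorphy factors times the bounded value $\Theta_i(w)$---plus a sum of $O(\rho)$ corrections, each a product of automorphy factors times some $\omega_{ij}$ times a value $\theta(\zeta)$ at an intermediate point $\zeta$ with $|\zeta|=O(\rho)$. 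Feeding in the bound on $\theta$ already proved, every correction is $\exp(O(\rho^2))$; since there are only $O(\rho)$ of them, the polynomial factor is swallowed by the exponential, and after enlarging the constant we get $|\Theta_i|_\rho\le e^{c_{15}\rho^2}$.

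The one delicate point, on which everything rests, is the quadratic-in-$\rho$ control of the accumulated automorphy factor: one must verify that summing the linear theta exponents over a path of length $O(\rho)$ whose intermediate points have norm $O(\rho)$ produces $O(\rho^2)$---uniformly in the path chosen---rather than a larger power. This is precisely where the boundedness of $\mathcal D$ and the explicit linear shape of the exponent $2\pi i(u_{j1}z_1+\dots+u_{jg}z_g+v_j)$ enter. The additive coupling of $\Theta_i$ to $\theta$ in the inhomogeneous recursion is a secondary nuisance, harmless because the $\theta$-estimate is derived first and then substituted.
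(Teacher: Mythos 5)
Your proof is correct and follows essentially the same route as the paper: decompose $z=w+k\cdot\lambda$ with $w$ in a bounded fundamental domain, and use the linear-in-$z$ exponent of the theta automorphy factor (inherited unchanged by $\theta A_s$ via periodicity, and in inhomogeneous form by $\theta H_i$ via quasi-periodicity) to accumulate at most $\exp(O(\rho^2))$ in transporting back to the fundamental domain. If anything, you supply more detail than the paper, which proves only the case $G=1$ explicitly and dismisses the abelian and quasi-periodic cases as ``analogous,'' whereas you spell out the inhomogeneous recursion for $\theta H_i$; your restriction to $\rho\ge 1$ is also a sensible reading of the statement, since $\theta(0)\neq 0$ makes the bound delicate as $\rho\to 0$.
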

	\begin{proof}
		By definition of theta function, for all $j=1,\dots,2g$ we may find $u_{j1},\dots, u_{jg}, v_j\in \C$ satisfying
		\[
		\theta(z+\lambda_j)=\theta(z)\exp( 2\pi i(u_{j1}z_1+\dots+u_{jg}z_g+v_j)).
		\]
		Let us denote by $U$ the $2g\times g$ complex matrix with entries the $u_{ij}$'s and by $V$ the vector $(v_1,\dots,v_{2g})\in\C^{2g}$. For any $k\in\Z^{2g}$ we have
		\[
		\theta(z+k\cdot \lambda)=\theta(z)\exp\left(2\pi i(\,^tk U z+ V\cdot k )\right).
		\]
		Let us now pick $z\in\C^g$ with $|z|\le\rho$. We may write $z=w+k\cdot\lambda$ for some $k\in\Z^g$ and $w$ in the fundamental parallelogram generated by $\lambda_1,\dots,\lambda_{2g}$. If $M$ denotes the maximum of $|\theta|$ on such parallelogram, we have
		\[
		|\theta(z)|=|\theta(w)|\exp\left(\text{Re}(2\pi i\,^tk U z+ 2\pi iV\cdot k )\right)\le M\exp\left(|2\pi\,^tk U z+ 2\pi V\cdot k|\right).
		\]
		Let $L$ be the minimum modulus of the $\lambda_j$'s, so that $\|k\|\le \frac{1}{L}\rho$. If $\|U\|$ denotes the maximum modulus of the $u_{ij}$'s, then 
		\[
		\left|^tk Uz\right|\le \frac{2g^2\|U\|}{L}\rho^2, \quad |V\cdot  k|\le 2g\|V\|\rho,
		\]
		and the claim for $G=1$ follows. The remaining cases can be treated analogously, by exploiting the fact that $G$ is either abelian or quasi-periodic.
	\end{proof}

	This computations lead us to the estimate
	\[
	|\Psi|_{c_{20}R_0}\le e^{c_{22}(D_0R_0\log(D_0R)+DR_0^2)},
	\]
	and therefore 
	\[
	|\Psi|_{c_{19}R_0}\le e^{-c_{23}T_0R_0^2}.
	\]
	A similar inequality carries through to $\partial_0\Psi(k_0\cdot\lambda)$. Indeed, by Cauchy's formula
	\[
	\frac{\partial^t\Psi(z)}{\partial z_1^{t_1}\dots \partial z_g^{t_g}}= \frac{t_1!\dots t_g!}{(2\pi i)^g}\int_{\gamma_1}\dots \int_{\gamma_g} \frac{\Psi(\zeta)}{(\zeta_1-z_1)^{t_1+1}\dots(\zeta_g-z_g)^{t_g+1}}\,d\zeta,
	\]
	where $\gamma_i$ denotes a circle centred at $z_i$ of radius $\le 1$ for all $i=1,\dots, g$. Hence, we derive the inequality
	\[
	\left|\partial_0 \Psi(k_0\cdot\lambda) \right|\le \exp(-c_{24}T R_0^2).
	\]
	Since $\theta_0(0)\neq 0$, it follows that $|\theta_0(k_0\cdot\lambda)^{(g+m)\delta D}|\ge e^{c_{25}DR_0^2}$, hence
	\[
	|P(\omega,\chi)|=\frac{|\partial_0\Psi(k_0\cdot\lambda)|}{|\theta_0(k_0\cdot\lambda)^{(g+m)\delta D}|}\le e^{-c_{26}T_0R_0^2}.
	\]
 
	By taking the norm of $P(\omega,\chi)$ over $\Q(\omega)$, we eventually obtain a polynomial $Q$ with integer coefficients satisfying
	\[
	0<|Q(\omega)|\le e^{-c_{27}T_0R_0^2}, \qquad t(Q)\le c_{28} D_0R_0\log(D_0R_0). 
	\]
	An exposition of the missing computation can be found for example in \cite{Feldman}. Since $(D_0R_0\log(D_0R_0))^{2+\frac{1}{a}}<T_0R_0^2$, we conclude that $\omega$ has the desired transcendence type.
	
	\section{Some applications}
	
	Let us now comment on some features of Theorem~\ref{theo: 3}. As we previously remarked, almost all transcendental numbers have transcendence type $\le 2+\epsilon$ for any $\epsilon>0$. It should therefore be expected that in almost all cases Theorem~\ref{theo: 3} yields in fact the existence of two algebraically independent numbers in $S$, provided $2m+n>2g$. Another reason why it seems likely to turn this Theorem into the form $\trdeg(\Q(S)/\Q)\ge 2$ is that it is possible to see that $\pi$ belongs to the field generated by the entries of $\Omega$. Thus, in case $\pi$ can actually be generated by fewer than $g+1$ rows of $\Omega$, we deduce the existence of two algebraically independent numbers among the ones in $S$ when selecting precisely those rows.
 
	The main obstruction to turning Theorem~\ref{theo: 3} in an algebraic independence statement lies in the fact that we have not been able to apply Gelfond's criterion \cite[Théorème 5.1.1]{Waldschimdt-fr} at the end of the proof. By letting $N$ range through all sufficiently large natural numbers, our argument yields a sequence of polynomials $Q_N$ satisfying 
	\[
	0<|Q_N(\omega)|\le e^{-c_{27}T_0R_0^2}, \qquad t(Q_N)\le c_{28} D_0R_0\log(D_0R_0),
	\]
	where the quantities $T_0,R_0, D_0$ are defined as above and depend on some $N_0\ge N$. If we managed to bound $N_0$ from above by a suitable power of $N$, Gelfond's criterion would lead to a contradiction with the transcendence of $\omega$. One could achieve this by supposing that the function $\Phi(z)$ of Proposition~\ref{prop: aux. function 3} vanishes with multiplicity $T_0$ at all points of the form $k\cdot\lambda$ with $\|k\|\le R_0$. Imposing these conditions yields a homogeneous linear system in the $E_{hl\nu}$, with the number of unknowns depending on $N$ and the one of equations depending on $N_0$. Using the fact that the $E_{hl\nu}$'s are not all zero, one may derive an upper bound of $N_0$ in terms of $N$ by computing the rank of the matrix associated with this linear system. However, the exponential terms in the expression of $\Phi(z)$ make it somewhat unclear how to practically compute such rank.

	We now examine some applications of Theorem~\ref{theo: 3}. For an integer $N\ge 3$, let us consider the curve $y^2=1-4x^N$, which has genus $\lfloor\frac{N-1}{2}\rfloor$. Let us also write $\zeta=e^{\frac{2\pi i}{N}}$. As shown in \cite[Chapter V]{Lang-introduction}, the period lattice of the Jacobian variety associated with this curve has generators given by the vectors
	\[
	\lambda_j=\left( \dots, \zeta^{kj}\left(1-\zeta^k\right)^2\frac{1}{N}B\left(\frac{k}{N},\frac{k}{N} \right), \dots\right)
	\] 
	for $j=0,\dots, N-1$, the components running over $k=1,\dots,\lfloor\frac{N-1}{2}\rfloor$, together with the vector
	\[
	\left( \dots, \left(1-\zeta^k\right)\frac{1}{N}B\left(\frac{k}{N},\frac{k}{N} \right), \dots\right).
	\] 
	An analogous expression applies to the quasi-periods, provided we let $k$ run from $\lfloor\frac{N+1}{2}\rfloor$ to $N-1$. 
 
	One may now apply Theorem~\ref{theo: 3} in order to derive results of algebraic independence for $B$-values, for instance by exploiting the fact that for any $a\in\Q\smallsetminus\Z$ the number $B(a,a)B(1-a,1-a)$ is a non-zero algebraic multiple of $\pi$. Let us go through some examples of these arguments. 
	\begin{corollary}
		For any non-zero complex number $\xi$, there are two algebraically independent numbers among 
		\[
		B\left( \frac{1}{12},\frac{1}{12}\right), \; B\left(\frac{5}{12},\frac{5}{12}\right),\; \pi,\; \xi, \; e^{\xi}, \; e^{i\sqrt{3}\xi}.
		\]
	\end{corollary}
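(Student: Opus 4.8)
The plan is to deduce the statement from Theorem~\ref{theo: 3} by contradiction, exploiting the contrast recalled in the introduction: $\pi$ admits a transcendence type $\le 2+\epsilon$ for every $\epsilon>0$, whereas Theorem~\ref{theo: 3} produces a transcendence type bounded away from $2$. So suppose no two of the six numbers are algebraically independent; then the field $F$ they generate satisfies $\trdeg(F/\Q)\le 1$, and since $\pi$ is transcendental in fact $\trdeg(F/\Q)=1$. For $X$ I would take (an isogeny factor of) the abelian variety attached to $y^2=1-4x^{12}$; writing $\zeta=e^{2\pi i/12}$, the applications section records that the entries of its period matrix $\Omega$ are algebraic multiples of the numbers $B(\tfrac{k}{12},\tfrac{k}{12})$, two entries of one row differing only by a power of $\zeta$. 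I would then choose the rows of $\Omega$ so that $S$ contains $B(\tfrac1{12},\tfrac1{12})$, $B(\tfrac5{12},\tfrac5{12})$ together with a quasi--period giving $B(\tfrac{11}{12},\tfrac{11}{12})$; the reflection relation $B(a,a)B(1-a,1-a)\in\overline{\Q}^{\times}\pi$ with $a=\tfrac1{12}$ then forces $\pi\in\Q(S)$.

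To place $e^{\xi}$ and $e^{i\sqrt3\,\xi}$ into $\Q(S)$ the arithmetic input is the identity $i\sqrt3=\zeta^2+\zeta^4\in\Q(\sqrt{-3})\subset\Q(\zeta_{12})$. The cleanest realisation is to let $X$ contain an elliptic factor with complex multiplication by $\Q(\sqrt{-3})$ and period lattice $\Z\,\Omega'+\Z\,i\sqrt3\,\Omega'$, and to take as the single chosen row of $E$ the exponential row attached to this factor; setting $\xi_i=\xi/\Omega'$ and using the product structure, its entries are $e^{\xi}$ and $e^{i\sqrt3\,\xi}$ together with entries equal to $1$ coming from the other blocks. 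A direct count then gives $2m+n>2g$ (for instance $g=3$, $m=3$, $n=1$, so that $2m+n=7>6$), and, granting that $\trdeg(\Q(S)/\Q)=1$, Theorem~\ref{theo: 3} applied to $\omega=\pi$ yields $\tau\ge 2+\frac{2m+n-2g}{2g+n}>2$, contradicting $\tau\le 2+\epsilon$ for all $\epsilon>0$. Hence $\trdeg(F/\Q)\ge 2$.

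The step I expect to be the crux is the one granted above: ensuring that enlarging $S$ to whole rows does not raise its transcendence degree, so that under the contradiction hypothesis $\trdeg(\Q(S)/\Q)$ is genuinely $1$. Two sources of difficulty must be controlled. First, had one tried to produce $e^{i\sqrt3\,\xi}$ from the two--dimensional $\Q(\zeta_{12})$--factor itself, the entries $e^{\sigma(\alpha)\xi}$ of an exponential row would be algebraic over $\Q(e^{\xi},e^{i\sqrt3\,\xi})$ only for $\sigma(\alpha)\in\Q(\sqrt{-3})$, i.e. for even powers of $\zeta$; but these span merely a rank--$2$ sublattice of the rank--$4$ period lattice, so any basis of cycles unavoidably contributes uncontrolled exponentials --- this is precisely why I route the exponentials through a $\Q(\sqrt{-3})$--elliptic block, where the product structure kills the extraneous entries. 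Second, and more seriously, the normalising constant $\xi_i=\xi/\Omega'$ drags the elliptic period $\Omega'$ into $S$, and one must check that $\Omega'$, which is an algebraic multiple of $B(\tfrac16,\tfrac16)$, does not enlarge the transcendence degree beyond that of the six numbers. Making the choice of $X$, of the cycles, and of the normalising period simultaneously compatible with $\pi\in\Q(S)$, $\trdeg(\Q(S)/\Q)=1$ and $2m+n>2g$ is the delicate heart of the argument.
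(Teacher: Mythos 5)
You follow the same route as the paper: force $\pi$ into $\Q(S)$ via the reflection products $B(a,a)B(1-a,1-a)\in\overline{\Q}^{\times}\pi$, route the exponentials through a block on which only cube roots of unity (equivalently $\Q(\sqrt{-3})$) act, so that the entries of the chosen row of $E$ and $e^{\xi},e^{i\sqrt{3}\xi}$ generate the same field up to algebraic extensions, and then play the lower bound of Theorem~\ref{theo: 3} against the fact that $\pi$ admits transcendence type $\le 2+\epsilon$ for every $\epsilon>0$. In one respect your version is actually sounder than the paper's: the paper applies Theorem~\ref{theo: 3} to the full genus-$5$ Jacobian of $y^2=1-4x^{12}$ with $m=4$, $n=1$, for which $2m+n=9\le 10=2g$, so the hypothesis $2m+n>2g$ is violated as written; your passage to a $3$-dimensional isogeny factor, with $g=3$, $m=3$, $n=1$ and $2m+n=7>6$, is precisely the repair needed to make that count work.

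The step you ``grant'', however, is a genuine gap, and in your setup it is not merely unproven but false. Assume, as you do, that the six stated numbers generate a field of transcendence degree $1$; since $\pi$ is among them, the algebraic closure of that field is $\overline{\Q(\pi)}$, and in particular $\xi\in\overline{\Q(\pi)}$, $\xi\neq 0$. By the construction preceding Theorem~\ref{theo: 3}, the set $S$ must contain the normalizing number $\xi_i=\xi/\Omega'$. Now $\Omega'$ is a nonzero period of a CM elliptic curve over $\overline{\Q}$, hence an algebraic multiple of $\Gamma(\tfrac13)^3/\pi$, and Chudnovsky's theorem (recalled in the paper's introduction) gives the algebraic independence of $\pi$ and $\Gamma(\tfrac13)$; so $\Omega'\notin\overline{\Q(\pi)}$, whence $\xi/\Omega'\notin\overline{\Q(\pi)}$ (otherwise $\Omega'=\xi\cdot(\xi/\Omega')^{-1}$ would lie in $\overline{\Q(\pi)}$). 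Thus under the very hypothesis you want to contradict one has $\trdeg(\Q(S)/\Q)=2$: the hypothesis of Theorem~\ref{theo: 3} fails and no contradiction can be extracted. Moving the $\Omega'$-row of $\Omega$ into the chosen rows does not help, since then $\Omega'$ enters $S$ directly, with the same effect. You should know that the paper's own proof breaks at exactly this point: there $S$ contains $\xi_4=\xi\bigl((1-\zeta^4)^2\tfrac{1}{12}B(\tfrac13,\tfrac13)\bigr)^{-1}$, and the closing assertion that $\trdeg\Q(S)$ equals the transcendence degree of the field generated by the six stated numbers would require $B(\tfrac13,\tfrac13)$ to be algebraic over that field, which is not known (and which fails, by the same Chudnovsky argument, under the contradiction hypothesis). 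So you have isolated the crux correctly, but it is a real hole in both arguments; what the method genuinely proves is the corollary with $\xi$ replaced by $\xi/\Omega'$. (As a consolation, the stated corollary is true for elementary reasons: by the Gauss multiplication and reflection formulas, $B(\tfrac{1}{12},\tfrac{1}{12})$ is an algebraic multiple of $\Gamma(\tfrac14)^2/\sqrt{\pi}$, so its algebraic independence from $\pi$ is already Chudnovsky's theorem.)
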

	\begin{proof}
		We apply Theorem~\ref{theo: 3} to the complex Abelian variety described above for $N=12$ with the following choices. We choose the rows of the period matrix whose components are algebraic multiples of
		\[
		B\left( \frac{1}{12},\frac{1}{12}\right), \; B\left(\frac{5}{12},\frac{5}{12}\right),\; B\left( \frac{7}{12},\frac{7}{12}\right), \; B\left(\frac{11}{12},\frac{11}{12}\right).
		\]
		As for the matrix $E$ defined before Theorem~\ref{theo: 3}, we choose its fourth row, and we also pick 
		\[
		\xi_4=\xi\left(\left(1-\zeta^4\right)^2\frac{1}{12}B\left(\frac{4}{12},\frac{4}{12} \right)\right)^{-1},
		\]
		where $\zeta=e^{\frac{2\pi i}{12}}$. It is readily checked that the products
		\[
		B\left( \frac{1}{12},\frac{1}{12}\right)B\left(\frac{11}{12},\frac{11}{12}\right), \quad B\left(\frac{5}{12},\frac{5}{12}\right)B\left( \frac{7}{12},\frac{7}{12}\right)
		\]
		are non-zero algebraic multiples of $\pi$. Thus, $\pi$ belongs to the field generated by $S$, with $S$ defined as in Theorem~\ref{theo: 3}. As a result, $\trdeg(\Q(S)/\Q)\ge 2$. The numbers of $S$ appearing in the matrix $E$ are of the form $e^{\xi\zeta^{4j}}$ for some integers $j$, together with $e^{\xi(1-\zeta^{4})^{-1}}$. Moreover, $\zeta^4=\rho$, where $\rho=e^{\frac{2\pi i}{3}}$, while $(1-\zeta^4)$ is a primitive sixth root of unity. Hence, these numbers turn out to be
		\[
		e^\xi, \; e^{\xi\rho}, \; e^{\xi\rho^2}, \; e^{\xi\rho^{-1/2}}.
		\]
		Since $\rho=(-1+i\sqrt{3})/2$, it follows that the transcendence degree of $\Q(S)$ coincides with the one of the field generated over $\Q$ by the numbers in the statement, which is therefore proved.
	\end{proof}

	A similar strategy, choosing the third row of the matrix $E$, allows for example to prove that there are at least two algebraically independent numbers among
	\[
	B\left( \frac{1}{12},\frac{1}{12}\right), \; B\left(\frac{5}{12},\frac{5}{12}\right),\; \pi, \; \xi, \; e^{\xi},\; e^{i\xi}.
	\]
	By choosing $\xi=\log 2$ or $\xi=\pi^2$, one deduces for instance the existence of two algebraically independent numbers in each of the following sets:
	\begin{gather*}
    \left\{B\left( \frac{1}{12},\frac{1}{12}\right), \; B\left(\frac{5}{12},\frac{5}{12}\right),\; \pi, \; \log 2,\; 2^{i}\right\};\\
	\left\{B\left( \frac{1}{12},\frac{1}{12}\right), \; B\left(\frac{5}{12},\frac{5}{12}\right),\; \pi, \; e^{\pi^2},\; e^{i\pi^2}\right\}.
	\end{gather*}
	\newpage
	
	%\addcontentsline{toc}{chapter}{Bibliography}
	
\end{document}